\numberwithin{equation}{section}
\numberwithin{figure}{section}
\theoremstyle{plain}
\newtheorem{thm}{\protect\theoremname}
\newenvironment{proof}[1][\protect\proofname]{\par
	\normalfont\topsep6\p@\@plus6\p@\relax
	\trivlist
	\itemindent\parindent
	\item[\hskip\labelsep\scshape #1]\ignorespaces
}{%
	\endtrivlist\@endpefalse
}
\providecommand{\proofname}{Proof}
\theoremstyle{definition}
\newtheorem{defn}[thm]{\protect\definitionname}
\theoremstyle{plain}
\newtheorem{lem}[thm]{\protect\lemmaname}
\theoremstyle{remark}
\newtheorem{rem}[thm]{\protect\remarkname}
\theoremstyle{plain}
\newtheorem{prop}[thm]{\protect\propositionname}
\providecommand{\definitionname}{Definition}
\providecommand{\lemmaname}{Lemma}
\providecommand{\propositionname}{Proposition}
\providecommand{\remarkname}{Remark}
\providecommand{\theoremname}{Theorem}
\begin{document}
\title[Random vectors on a Curie-Weiss model with random couplings]{Random vectors on the spin configuration of a Curie-Weiss model on
Erd\H{o}s-Rényi random graphs}
\author{Dominik R. Bach}
\address{\textsuperscript{}Max Planck UCL Centre for Computational Psychiatry
and Ageing Research, University College London, United Kingdom}
\thanks{The author would like to thank Prof. Werner Kirsch, FernUniversität
in Hagen, Germany, for critical support and guidance through this
work.}
\date{11 January 2024}
\begin{abstract}
This article is concerned with the asymptotic behaviour of random
vectors in a diluted ferromagnetic model. We consider a model introduced
by Bovier \& Gayrard (1993) with ferromagnetic interactions on a directed
Erd\H{o}s-Rényi random graph. Here, directed connections between graph
nodes are uniformly drawn at random with a probability $p$ that depends
on the number of nodes $N$ and is allowed to go to zero in the limit.
If $Np\longrightarrow\infty$ in this model, Bovier \& Gayrard (1993)
proved a law of large numbers almost surely, and Kabluchko et al.
(2020) proved central limit theorems in probability. Here, we generalise
these results for $\beta<1$ in the regime $Np\longrightarrow\infty$.
We show that all those random vectors on the spin configuration that
have a limiting distribution under the Curie-Weiss model converge
weakly towards the same distribution under the diluted model, in probability
on graph realisations. This generalises various results from the Curie-Weiss
model to the diluted model. As a special case, we derive a law of
large numbers and central limit theorem for two disjoint groups of
spins. 
\end{abstract}

\keywords{Ising model, dilute Curie-Weiss model, Law of Large numbers, Central
Limit Theorem, random graphs}

\maketitle
\newpage{}

\section{Introduction }

\subsection{Background}

Diluted ferromagnetic models might be taken to describe the behaviour
of a quenched alloy of ferromagnetic and a non-magnetic material \cite{georgii1981spontaneous,griffiths1968random},
or the behaviour of voters who interact at random \cite{Kirsch_2020}.
Even though more realistic models have been suggested for the latter
case \cite{LOWE2020124735}, the diluted ferromagnetic model remains
an abstract and tractable approach. The study of diluted ferromagnetic
models has historically focused on Ising type models (e.g. \cite{chayes1985low,dunn1975scaling,griffiths1968random,georgii1981spontaneous}).
Bovier \& Gayrard (1993) \cite{bovier1993thermodynamics} introduced
a diluted version of the Curie-Weiss model on directed Erd\H{o}s-Rényi
random graphs. They proved a law of large numbers almost surely. For
a very similar model, recent work proved a central limit theorem in
probability over graph configurations, for the case where the external
magnetic field $h=0$ and $\beta<1$, provided that $N^{2}p^{3}\longrightarrow\infty$
\cite{kabluchko2019fluctuations}. In \cite{Kabluchko_2020}, they
sharpened the approximations to the less tight regime $Np\longrightarrow\infty$
for $\beta<1$, and derived a central limit theorem for $\beta=1$.
In \cite{Kabluchko_low_temp}, they derived (conditional) central
limit theorems for the the cases $\beta>0$ and external magnetic
field $h>0$. Some physical properties of the model have been analysed
in \cite{deSanctis2008,Agliari2008}, and fluctuations of the partition
function over graph configurations in \cite{Kabluchko2020fluctuations}.
The goal of this note is to generalise the central limit theorem for
$Np\longrightarrow\infty$ and $\beta<1$ to a wide set of random
vectors, including a law of large numbers and central limit theorem
for the homogenous two-group case \cite{Kirsch_2020}. 

\subsection{Description of the model}

We consider spin configurations on a sequence of directed Erd\H{o}s-Rényi
random graphs with $N=1,2,...$ nodes. Each node $i\in\mathbb{N}_{N}$
can take spin $x_{i}\in\left\{ -1,+1\right\} $ such that $\mathbf{x}_{N}\coloneqq\left(x_{1},x_{2},...,x_{N}\right)\in\mathcal{X}_{N}\coloneqq\left\{ -1,+1\right\} ^{N}$.
The presence of a directed edge from node $i$ to node $j$ in graph
$N$ is denoted by the indicator variable $\varepsilon_{N,i,j}\in\left\{ 0,1\right\} $.
Let $p:\mathbb{N}\rightarrow\left]0,1\right]$ be an arbitrary function.
We define a product probability space over the entire graph sequence:

\begin{equation}
\left(\Omega_{\varepsilon},\mathcal{A}_{\varepsilon},\mathbb{P_{\varepsilon}}\right)\coloneqq\left(\underset{\left(N,i,j\right)\in\mathbb{N}^{3}}{\bigtimes}\left\{ 0,1\right\} ,\underset{\left(N,i,j\right)\in\mathbb{N}^{3}}{\bigotimes}\mathcal{P}\left(\left\{ 0,1\right\} \right),\underset{\left(N,i,j\right)\in\mathbb{N}^{3}}{\bigotimes}\mathbb{P}_{N,i,j}\right)
\end{equation}
with $\mathbb{P}_{N,i,j}\left(\varepsilon_{N,i,j}=1\right)\coloneqq p\left(N\right)$.
For every fixed graph realisation, we define two probability measures
over $\left(\mathcal{X}_{N},\mathcal{P}\left(\mathcal{X}_{N}\right)\right)$,
corresponding to the well-known Curie-Weiss model, and the Bovier-Gayrard
model, with:

\begin{equation}
\mathbb{P}_{N}^{\left(CW\right)}\left(\mathbf{x}\right)\coloneqq\mu_{N}^{\left(CW\right)}\left(\mathbf{x}\right)/Z_{N}^{(CW)}\coloneqq e^{\frac{\beta}{2N}\stackrel[i,j]{N}{\sum}x_{i}x_{j}}/\underset{\mathbf{x}\in\mathcal{X}_{N}}{\sum}\mu_{N}^{\left(CW\right)}\left(\mathbf{x}\right).
\end{equation}

\begin{equation}
\mathbb{P}_{N}^{\left(BG\right)}\left(\mathbf{x}\right)\coloneqq\mu_{N}^{\left(BG\right)}\left(\mathbf{x}\right)/Z_{N}^{(BG)}\coloneqq e^{\frac{\beta}{2Np}\stackrel[i,j]{N}{\sum}\varepsilon_{N,i,j}x_{i}x_{j}}/\underset{\mathbf{x}\in\mathcal{X}_{N}}{\sum}\mu_{N}^{\left(BG\right)}\left(\mathbf{x}\right).
\end{equation}

We denote the total magnetisation (sum of all spins) of the graph
with $s_{N}\left(\mathbf{x}_{N}\right)$; recall that $s_{N}^{2}\left(\mathbf{x}_{N}\right)=\stackrel[i,j]{N}{\sum}x_{i}x_{j}$.
We use $\mathbf{x}$, $\mathbf{x}_{1}$, etc. when the number of elements
$N$ is clear from the context. For the remainder of the paper, we
assume $\beta<1$ and $Np\longrightarrow\infty$ as $N\longrightarrow\infty$.
Furthermore, we define $\left(f_{N}\right)_{N\in\mathbb{N}}$, $f_{N}:\mathcal{X}_{N}\longrightarrow\mathbb{R}$
with $\left|f_{N}\right|<M\in\mathbb{R}$ for all $N\in\mathbb{N}$.
We denote the positive and negative parts of $f_{N}$ with $f_{N}^{+}>0$
and $f_{N}^{-}>0$. 

\section{Main results}

Denote weak convergence with $\Longrightarrow$, and $\mathbb{P}_{\varepsilon}$-stochastic
convergence with $\overset{\mathcal{\mathbb{P}_{\varepsilon}}}{\longrightarrow}$.
For two probability measures $\mathbb{P}_{1}$ and $\mathbb{P}_{2}$,
denote the Levy metric of weak convergence with $d_{L}\left(\mathbb{P}_{1},\mathbb{P}_{2}\right)$.
Recall that $\left(\mathbb{P}_{n}\Longrightarrow\mathbb{P}\right)\Leftrightarrow\left(d_{L}\left(\mathbb{P}_{n},\mathbb{P}\right)\longrightarrow0\right)$
(e.g. \cite{Elstrodt_2011}).
\begin{thm}
\label{thm:weak-convergence-probability-Rd}Let $\beta<1$ and $Np\longrightarrow\infty$
as $N\longrightarrow\infty$, let $\left(Y_{N}\right)_{N\in\mathbb{N}}$
with $Y_{N}:\mathcal{X}_{N}\longrightarrow\mathbb{R}^{d}$, $d\in\mathbb{\mathbb{N}}$,
have limiting image distribution $\mathbb{P}_{Y}^{\left(CW\right)}$
such that $\left(\mathbb{P}_{N}^{\left(CW\right)}\circ Y_{N}^{-1}\right)\Longrightarrow\mathbb{P}_{Y}^{\left(CW\right)}$.
Then 

\[
d_{L}\left(\mathbb{P}_{N}^{\left(BG\right)}\circ Y_{N}^{-1},\mathbb{P}_{Y}^{\left(CW\right)}\right)\overset{\mathcal{\mathbb{P}_{\varepsilon}}}{\longrightarrow}0.
\]

In shorthand notation, we write:

\[
Y_{N}\stackrel[BG]{\mathbb{P}_{\varepsilon}}{\Longrightarrow}\mathbb{P}_{Y}^{\left(CW\right)}.
\]
\end{thm}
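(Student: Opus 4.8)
The plan is to reduce the convergence of arbitrary bounded-range random vectors $Y_N$ to a statement about the two partition functions (or rather, the two families of measures weighted by a test function) and then exploit the fact that the Lévy metric is controlled by ratios of exponential moments. Concretely, for a fixed bounded continuous test function $g:\mathbb{R}^d\to\mathbb{R}$ with $|g|\le 1$, write
\[
\mathbb{E}_{N}^{(BG)}\!\left[g(Y_N)\right]
=\frac{\sum_{\mathbf{x}\in\mathcal{X}_N} g(Y_N(\mathbf{x}))\,e^{\frac{\beta}{2Np}\sum_{i,j}\varepsilon_{N,i,j}x_ix_j}}{\sum_{\mathbf{x}\in\mathcal{X}_N} e^{\frac{\beta}{2Np}\sum_{i,j}\varepsilon_{N,i,j}x_ix_j}}.
\]
The first step is the standard decoupling of the quenched Hamiltonian: since $\sum_{i,j}\varepsilon_{N,i,j}x_ix_j = s_N^2(\mathbf{x}) + \sum_{i,j}(\varepsilon_{N,i,j}-p)x_ix_j$, one has $\frac{\beta}{2Np}\sum_{i,j}\varepsilon_{N,i,j}x_ix_j = \frac{\beta}{2N}s_N^2(\mathbf{x}) + R_N(\mathbf{x},\varepsilon)$ where $R_N(\mathbf{x},\varepsilon)=\frac{\beta}{2Np}\sum_{i,j}(\varepsilon_{N,i,j}-p)x_ix_j$ is the ``noise'' term. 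Thus $\mu_N^{(BG)}(\mathbf{x}) = \mu_N^{(CW)}(\mathbf{x})\,e^{R_N(\mathbf{x},\varepsilon)}$, and both the numerator and denominator above become Curie–Weiss expectations of $g(Y_N)e^{R_N}$ and $e^{R_N}$ respectively.

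The second step is to show that $R_N(\mathbf{x},\varepsilon)$ is uniformly small on the portion of configuration space that carries the Curie–Weiss mass — this is where $\beta<1$ and $Np\to\infty$ enter. Under $\mathbb{P}_N^{(CW)}$ with $\beta<1$, the total magnetisation satisfies $s_N(\mathbf{x})=O_{\mathbb{P}}(\sqrt N)$ (a known consequence of the $\beta<1$ CLT, which is implied by the hypothesis applied to $Y_N = s_N/\sqrt N$, or can be quoted directly). On such configurations, $R_N$ is a sum of $N^2$ mean-zero bounded independent-across-$\varepsilon$ terms scaled by $\beta/(2Np)$; a second-moment / Chebyshev estimate in $\mathbb{P}_\varepsilon$ gives $\mathbb{E}_\varepsilon[R_N(\mathbf{x},\varepsilon)^2] \le C N^2 \cdot \frac{p(1-p)}{(Np)^2} = O(1/(Np)) \to 0$ for each fixed $\mathbf{x}$, and one needs a uniform version over the typical set, obtained either by a union bound after exponential concentration (Hoeffding in $\varepsilon$) or, more robustly, by bounding the annealed quantity $\mathbb{E}_\varepsilon\mathbb{E}_N^{(CW)}[(e^{R_N}-1)^2]$ and $\mathbb{E}_\varepsilon\mathbb{E}_N^{(CW)}[(e^{R_N}-1)^2\,|Y_N|^{\cdots}]$ directly. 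The upshot should be that both $\mathbb{E}_N^{(CW)}[e^{R_N}]\overset{\mathbb{P}_\varepsilon}{\to}1$ and $\mathbb{E}_N^{(CW)}[g(Y_N)(e^{R_N}-1)]\overset{\mathbb{P}_\varepsilon}{\to}0$.

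The third step assembles these: by hypothesis $\mathbb{E}_N^{(CW)}[g(Y_N)]\to \int g\,d\mathbb{P}_Y^{(CW)}$ for every bounded continuous $g$, hence by the previous step $\mathbb{E}_N^{(BG)}[g(Y_N)] = \frac{\mathbb{E}_N^{(CW)}[g(Y_N)] + \mathbb{E}_N^{(CW)}[g(Y_N)(e^{R_N}-1)]}{1 + \mathbb{E}_N^{(CW)}[e^{R_N}-1]} \overset{\mathbb{P}_\varepsilon}{\to}\int g\,d\mathbb{P}_Y^{(CW)}$ for each fixed $g$. To upgrade convergence for each $g$ to convergence in the Lévy metric (which requires uniformity over a class of test functions), I would pass to a countable convergence-determining family $\{g_k\}$, obtain $\mathbb{P}_\varepsilon$-convergence simultaneously along a common probability-one-in-the-limit argument (diagonal subsequence + the metrisability of convergence in probability), and invoke $d_L(\mathbb{P}_n,\mathbb{P})\to 0 \iff \int g_k\,d\mathbb{P}_n\to\int g_k\,d\mathbb{P}$ for all $k$ together with tightness, which follows from the weak convergence of the Curie–Weiss images. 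I expect the main obstacle to be exactly this last uniformity issue: controlling the noise term $R_N$ \emph{uniformly} enough — over configurations and over test functions — so that the ratio of partition-function-like quantities converges in $\mathbb{P}_\varepsilon$-probability rather than merely for each fixed $\mathbf{x}$ or each fixed $g$; the splitting of $f_N$ into positive and negative parts $f_N^+, f_N^-$ flagged in the setup suggests the intended route is to handle numerator and denominator as genuine (sub-probability) partition functions and run the concentration argument on each separately.
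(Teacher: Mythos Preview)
Your second-moment computation contains an arithmetic slip with fatal consequences. You write $\mathbb{E}_\varepsilon[R_N(\mathbf{x},\varepsilon)^2]\le C\,N^2\cdot \frac{p(1-p)}{(Np)^2}$ and then claim this is $O(1/(Np))$; in fact $N^2\cdot \frac{p(1-p)}{(Np)^2}=\frac{1-p}{p}$, which is bounded away from zero and indeed diverges in the interesting regime $p\to 0$, $Np\to\infty$. More structurally: $R_N(\mathbf{x},\varepsilon)$ is $\frac{\beta}{2Np}$ times a sum of $N^2$ centred i.i.d.\ variables of variance $p(1-p)$, so its typical size under $\mathbb{P}_\varepsilon$ is of order $\beta/\sqrt{p}$. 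Hence $e^{R_N(\mathbf{x},\varepsilon)}$ does \emph{not} concentrate near $1$ for any $p$, and neither $\mathbb{E}_N^{(CW)}[e^{R_N}]\overset{\mathbb{P}_\varepsilon}{\to}1$ nor $\mathbb{E}_N^{(CW)}[g(Y_N)(e^{R_N}-1)]\overset{\mathbb{P}_\varepsilon}{\to}0$ can hold. The annealed alternative you mention fails for the same reason: $\mathbb{E}_\varepsilon[e^{R_N}]$ is of order $\exp(c/p)$, so $\mathbb{E}_\varepsilon\mathbb{E}_N^{(CW)}[(e^{R_N}-1)^2]$ blows up.

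The paper's route avoids this by never asking the noise to vanish pointwise. Instead it divides $\mu_N^{(BG)}(\mathbf{x})$ by a \emph{random} ($\varepsilon$-dependent but $\mathbf{x}$-independent) normaliser $\cosh(\beta/(2Np))\exp\bigl(\sum_{i,j}\varepsilon_{N,i,j}\bigr)$, which absorbs the large common fluctuation. The Kabluchko--L\"owe--Schubert expansions (Lemma~\ref{thm:Kabluchko-beta-1}) then give, on the typical set, $E_{\varepsilon,N}$ of this renormalised weight equal to $e^{-\beta^2/8}\mu_N^{(CW)}(\mathbf{x})(1+o(1))$, and crucially the $E_{\varepsilon,N}$ of the \emph{product} over two configurations factorises as $e^{-\beta^2/4}\mu_N^{(CW)}(\mathbf{x}_1)\mu_N^{(CW)}(\mathbf{x}_2)(1+o(1))$. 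These two facts together force the $\mathbb{P}_\varepsilon$-variance of the ratio
\[
R_N(f_N^+)\;=\;\frac{E_N^{(\mu,BG)}(f_N^+)}{\cosh(\beta/(2Np))\exp\!\bigl(-\tfrac{\beta^2}{8}+\sum_{i,j}\varepsilon_{N,i,j}\bigr)\,E_N^{(\mu,CW)}(f_N^+)}
\]
to vanish while its mean tends to $1$ (Proposition~\ref{lem:moment convergence-Np}). The large random factor is the same in numerator and denominator of $E_N^{(P,BG)}(f_N)$ and cancels; this cancellation, not smallness of $R_N(\mathbf{x},\varepsilon)$, is the mechanism. Your final paragraph correctly intuits that one should treat numerator and denominator as separate partition functions weighted by $f_N^\pm$, but the concentration has to be run on the \emph{renormalised ratio} via a first/second-moment argument against the Curie--Weiss reference, not on the raw noise $e^{R_N}$.
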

\begin{thm}
Let $\beta<1$ and $Np\longrightarrow\infty$ as $N\longrightarrow\infty$,
let $s_{N}^{\left(1\right)}\left(\mathbf{x}_{N}\right)$ and $s_{N}^{\left(2\right)}\left(\mathbf{x}_{N}\right)$
be the respective sums of two disjoint subsets of spins with respective
cardinality $N_{N,1}$ and $N_{N,2}$. 

(1) Law of large numbers. Let $m\left(\beta\right)$ be the unique
positive solution to $x=\tanh\left(\beta x\right)$. Then
\[
\left(\frac{1}{N_{N,1}}s_{N}^{\left(1\right)}\left(\mathbf{x}_{N}\right),\frac{1}{N_{N,2}}s_{N}^{\left(2\right)}\left(\mathbf{x}_{N}\right)\right)\stackrel[BG]{\mathbb{P}_{\varepsilon}}{\Longrightarrow}\frac{1}{2}\left(\delta_{\left(-m\left(\beta\right),-m\left(\beta\right)\right)}+\delta_{\left(+m\left(\beta\right),+m\left(\beta\right)\right)}\right),
\]

(2) Central limit theorem. Assume existence of 

\[
\alpha_{1}\coloneqq\underset{N\rightarrow\infty}{\lim}\frac{N_{N,1}}{N},\qquad\alpha_{2}\coloneqq\underset{N\rightarrow\infty}{\lim}\frac{N_{N,2}}{N},
\]

and define 

\[
C\coloneqq\left[\begin{array}{cc}
1+\alpha_{1}\frac{\beta}{1-\beta} & \sqrt{\alpha_{1}\alpha_{2}}\frac{\beta}{1-\beta}\\
\sqrt{\alpha_{1}\alpha_{2}}\frac{\beta}{1-\beta} & 1+\alpha_{2}\frac{\beta}{1-\beta}
\end{array}\right],
\]

then:

\[
\left(\frac{1}{\sqrt{N_{N,1}}}s_{N}^{\left(1\right)}\left(\mathbf{x}_{N}\right),\frac{1}{\sqrt{N_{N,2}}}s_{N}^{\left(2\right)}\left(\mathbf{x}_{N}\right)\right)\stackrel[BG]{\mathbb{P}_{\varepsilon}}{\Longrightarrow}\xi,\;\xi\thicksim\mathcal{N}\left(\left(0,0\right),C\right).
\]
\end{thm}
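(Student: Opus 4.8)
The plan is to obtain both parts as corollaries of Theorem~\ref{thm:weak-convergence-probability-Rd}. That theorem transports any limiting image distribution from the Curie--Weiss measure to the Bovier--Gayrard measure, $\mathbb{P}_\varepsilon$-stochastically, and it imposes no growth restriction on the random vector, so the $\sqrt N$-rescaling occurring in part~(2) is admissible. Hence it suffices to prove the two asserted weak limits \emph{under} $\mathbb{P}_N^{(CW)}$ and then apply Theorem~\ref{thm:weak-convergence-probability-Rd} once with $Y_N=\bigl(\tfrac1{N_{N,1}}s_N^{(1)},\tfrac1{N_{N,2}}s_N^{(2)}\bigr)$ and once with $Y_N=\bigl(\tfrac1{\sqrt{N_{N,1}}}s_N^{(1)},\tfrac1{\sqrt{N_{N,2}}}s_N^{(2)}\bigr)$.

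For the central limit theorem under $\mathbb{P}_N^{(CW)}$ I would linearise the quadratic interaction by the Hubbard--Stratonovich identity $e^{\beta s_N^2/(2N)}=\sqrt{N/(2\pi\beta)}\int_{\mathbb{R}}\exp\!\bigl(-Nz^2/(2\beta)+z\,s_N\bigr)\,dz$. Splitting $s_N=s_N^{(1)}+s_N^{(2)}+s_N^{(3)}$, where the possibly empty third block carries the remaining $N_{N,3}=N-N_{N,1}-N_{N,2}$ spins, and introducing conjugate parameters $h_1,h_2$, the sum over $\mathcal{X}_N$ factorises over sites and, after the prefactors $2^N\sqrt{N/(2\pi\beta)}$ cancel between numerator and denominator, yields
\[
\mathbb{E}_N^{(CW)}\!\Bigl[e^{h_1 s_N^{(1)}+h_2 s_N^{(2)}}\Bigr]=\frac{\int_{\mathbb{R}}e^{-\frac{N}{2\beta}z^2}\bigl(\cosh(z+h_1)\bigr)^{N_{N,1}}\bigl(\cosh(z+h_2)\bigr)^{N_{N,2}}\bigl(\cosh z\bigr)^{N_{N,3}}\,dz}{\int_{\mathbb{R}}e^{-\frac{N}{2\beta}z^2}\bigl(\cosh z\bigr)^{N}\,dz}.
\]
Since $\beta<1$, the exponent $z\mapsto-z^2/(2\beta)+\log\cosh z$ has a strict global maximum at $z=0$ with second derivative $-(1-\beta)/\beta<0$ and decays to $-\infty$ as $|z|\to\infty$, so Laplace's method governs both integrals. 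Rescaling $z=w/\sqrt N$, setting $h_j=t_j/\sqrt{N_{N,j}}$, expanding $\log\cosh$ to second order, and using $N_{N,j}/N\to\alpha_j$ with $\alpha_1+\alpha_2+\alpha_3=1$, the right-hand side tends, for each fixed $(t_1,t_2)$, to
\[
\exp\!\Bigl(\tfrac12(t_1^2+t_2^2)+\tfrac{\beta}{2(1-\beta)}\bigl(\sqrt{\alpha_1}\,t_1+\sqrt{\alpha_2}\,t_2\bigr)^2\Bigr)=\exp\!\Bigl(\tfrac12\bigl[C_{11}t_1^2+2C_{12}t_1t_2+C_{22}t_2^2\bigr]\Bigr),
\]
the equality being the elementary identities $1-\alpha_j+\alpha_j/(1-\beta)=1+\alpha_j\beta/(1-\beta)$ and $-\sqrt{\alpha_1\alpha_2}+\sqrt{\alpha_1\alpha_2}/(1-\beta)=\sqrt{\alpha_1\alpha_2}\,\beta/(1-\beta)$. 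With $h_j=t_j/\sqrt{N_{N,j}}$ the left-hand side is the moment generating function of $Y_N=\bigl(\tfrac1{\sqrt{N_{N,1}}}s_N^{(1)},\tfrac1{\sqrt{N_{N,2}}}s_N^{(2)}\bigr)$, and the limit is that of $\mathcal{N}\bigl((0,0),C\bigr)$; by the continuity theorem for moment generating functions, $Y_N\Longrightarrow\mathcal{N}((0,0),C)$ under $\mathbb{P}_N^{(CW)}$, and Theorem~\ref{thm:weak-convergence-probability-Rd} delivers part~(2).

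Part~(1) is easier and needs only a fragment of this: the Curie--Weiss weight depends on $\mathbf{x}_N$ only through $s_N$, so conditionally on $s_N$ the configuration is uniform on the slice $\{s_N=\text{const}\}$, whence $\mathbb{E}_N^{(CW)}\!\bigl[(s_N^{(j)}/N_{N,j}-s_N/N)^2\mid s_N\bigr]=O(1/N_{N,j})\to0$; together with the classical Curie--Weiss law of large numbers for the total magnetisation this forces each block average to share the limit of $s_N/N$, giving $\bigl(\tfrac1{N_{N,1}}s_N^{(1)},\tfrac1{N_{N,2}}s_N^{(2)}\bigr)\Longrightarrow\tfrac12\bigl(\delta_{(-m(\beta),-m(\beta))}+\delta_{(m(\beta),m(\beta))}\bigr)$ under $\mathbb{P}_N^{(CW)}$, and Theorem~\ref{thm:weak-convergence-probability-Rd} again finishes. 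The single delicate point in the whole argument is making Laplace's method quantitative and uniform in $N$: one must bound the corrections generated by the vanishing fields $h_j$ and show that the integrals pick up negligible mass outside a shrinking neighbourhood of $z=0$. This is precisely where $\beta<1$ is used — it furnishes a non-degenerate quadratic maximum of the rate function and exponential decay of the integrands away from it. Granted that, the reduction through Theorem~\ref{thm:weak-convergence-probability-Rd} and the linear-algebra identification of the limiting quadratic form with $C$ are routine.
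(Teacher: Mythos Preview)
Your proposal follows the same route as the paper: both parts are derived by applying Theorem~\ref{thm:weak-convergence-probability-Rd} to transfer the corresponding Curie--Weiss limits to the Bovier--Gayrard model. The paper's proof is a single sentence that invokes Theorem~\ref{thm:weak-convergence-probability-Rd} together with the two-group Curie--Weiss results of \cite{Kirsch_2020}; you instead supply a self-contained derivation of those Curie--Weiss inputs via Hubbard--Stratonovich linearisation and Laplace's method for part~(2), and a slice-conditioning argument for part~(1). Your computations are correct (the limiting moment generating function indeed matches $\exp(\tfrac12 t^{\top}Ct)$), so the only difference is that you prove what the paper cites. One small remark: the phrase ``unique positive solution'' in the statement is awkward at $\beta<1$, where $m(\beta)=0$ and the limit in part~(1) is simply $\delta_{(0,0)}$; your argument handles this correctly since $s_N/N\Rightarrow 0$ under $\mathbb{P}_N^{(CW)}$ in this regime.
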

\begin{proof}
This theorem follows directly from theorem \ref{thm:weak-convergence-probability-Rd}
and the results for the Curie-Weiss model proven in \cite{Kirsch_2020}.
\end{proof}

\section{Technical preparation}

\subsection{Results from previous work}

The following definition and lemma are adapted from \cite{Kabluchko_2020}.
In this reference, $m$ is fixed at $m=1/5$, but it is easy to show
that the results hold for any fixed $0<m<1$. Part (1) restates Lemmata
3.1 - 3.2 in \cite{Kabluchko_2020}. Part (2) is implicit in their
proofs and made explicit here for clarity. For a random variable $G:\Omega_{\varepsilon}\rightarrow\mathbb{R}$
and fixed $N$, we define its expectation over graph configurations
with $E_{\varepsilon,N}\left(G\right)$.
\begin{defn}
\label{def:X_T}For fixed $0<m<1$, define the following sets of ``typical''
spin configurations, and pairs of spin configurations:

\[
\mathcal{X}_{T,N}\coloneqq\left\{ \mathbf{x}\in\mathcal{X}_{N}:s_{N}^{2}\left(\mathbf{x}\right)\leq N\left(Np\right)^{m}\right\} ,
\]

\[
\mathcal{X}_{T,N}^{\left(2\right)}\coloneqq\left\{ \left(\mathbf{x}_{1},\mathbf{x}_{2}\right)\in\mathcal{X}_{N}^{2}:s_{N}^{2}\left(\mathbf{x}_{1}\right)\leq N\left(Np\right)^{m},s_{N}^{2}\left(\mathbf{x}_{2}\right)\leq N\left(Np\right)^{m},\left(\stackrel[i=1]{N}{\sum}x_{1,i}x_{2,i}\right)^{2}\leq N\left(Np\right)^{m}\right\} .
\]
\end{defn}
\begin{lem}
\label{thm:Kabluchko-beta-1}

(1) For fixed $\mathbf{x}_{N}\in\mathcal{X}_{T,N}$ and $\left(\mathbf{x}_{N,1},\mathbf{x}_{N,2}\right)\in\mathcal{X}_{T,N}^{\left(2\right)}$:

\[
E_{\varepsilon,N}\left(\frac{\mu_{N}^{\left(BG\right)}\left(\mathbf{x}_{N}\right)}{\cosh\left(\frac{\beta}{2Np}\right)\exp\left(\stackrel[i,j]{N}{\sum}\varepsilon_{N,i,j}\right)}\right)=\exp\left(-\frac{\beta^{2}}{8}+\frac{\beta}{2N}s_{N}^{2}\left(\mathbf{x}\right)+c_{N,1}\right),
\]

\[
E_{\varepsilon,N}\left(\frac{\mu_{N}^{\left(BG\right)}\left(\mathbf{x}_{N,1}\right)}{\cosh\left(\frac{\beta}{2Np}\right)\exp\left(\stackrel[i,j]{N}{\sum}\varepsilon_{N,i,j}\right)}\frac{\mu_{N}^{\left(BG\right)}\left(\mathbf{x}{}_{N,2}\right)}{\cosh\left(\frac{\beta}{2Np}\right)\exp\left(\stackrel[i,j]{N}{\sum}\varepsilon_{N,i,j}\right)}\right)
\]
\[
=\exp\left(-\frac{\beta^{2}}{4}+\frac{\beta}{2N}\left(s_{N}^{2}\left(\mathbf{x}_{N,1}\right)+s_{N}^{2}\left(\mathbf{x}_{N,2}\right)\right)+c_{N,2}\right),
\]

where \textup{$\left(c_{N,1}\right)_{N\in\mathbb{N}}$} and $\left(c_{N,2}\right)_{N\in\mathbb{N}}$
are null sequences of real numbers that do not depend on $\mathbf{x}_{N},\mathbf{x}_{N,1},\mathbf{x}_{N,2}$. 

(2) For fixed $\mathbf{x}_{N},\mathbf{x}_{N,1},\mathbf{x}_{N,2}\in\mathcal{X}_{N}$:

\[
E_{\varepsilon,N}\left(\frac{\mu_{N}^{\left(BG\right)}\left(\mathbf{x}_{N}\right)}{\cosh\left(\frac{\beta}{2Np}\right)\exp\left(\stackrel[i,j]{N}{\sum}\varepsilon_{N,i,j}\right)}\right)=\exp\left(\frac{\beta^{2}}{8}+\frac{\beta}{2N}s_{N}^{2}\left(\mathbf{x}_{N}\right)\left(1+o\left(1\right)\right)+o\left(1\right)\right)
\]

\[
E_{\varepsilon,N}\left(\frac{\mu_{N}^{\left(BG\right)}\left(\mathbf{x}_{N,1}\right)}{\cosh\left(\frac{\beta}{2Np}\right)\exp\left(\stackrel[i,j]{N}{\sum}\varepsilon_{N,i,j}\right)}\frac{\mu_{N}^{\left(BG\right)}\left(\mathbf{x}{}_{N,2}\right)}{\cosh\left(\frac{\beta}{2Np}\right)\exp\left(\stackrel[i,j]{N}{\sum}\varepsilon_{N,i,j}\right)}\right)
\]

\[
=\exp\left(-\frac{\beta^{2}}{4}+o\left(1\right)+\left(\frac{\beta}{2N}+o\left(1\right)\right)\left(s_{N}^{2}\left(\mathbf{x}_{N,1}\right)+s_{N}^{2}\left(\mathbf{x}_{N,2}\right)\right)+\frac{o\left(1\right)}{2N}\left(\stackrel[i]{N}{\sum}x_{i,1}x_{i,2}\right)^{2}\right).
\]
\end{lem}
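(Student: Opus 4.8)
The plan is to exploit the independence of the edge indicators $\varepsilon_{N,i,j}$ under $\mathbb{P}_\varepsilon$: any quantity that factorises over the ordered pairs $(i,j)$ has a graph-expectation equal to the product of its per-edge expectations. Write $a\coloneqq\tfrac{\beta}{2Np}$, $t\coloneqq\tanh a$, and let $W_N(\mathbf{x})\coloneqq\mu_N^{(BG)}(\mathbf{x})/\cosh\!\left(a\right)^{\sum_{i,j}\varepsilon_{N,i,j}}$ denote the normalised weight appearing in the lemma. The first step is the elementary identity $e^{a\sigma}=\cosh(a)\left(1+\sigma\tanh a\right)$ for $\sigma=x_ix_j\in\{-1,+1\}$, which turns $W_N(\mathbf{x})$ into $\prod_{(i,j):\varepsilon_{N,i,j}=1}\left(1+x_ix_j\,t\right)$; dividing by $\cosh(a)^{\sum_{i,j}\varepsilon_{N,i,j}}$ is exactly what strips off the diagonal $\cosh$ factor that would otherwise contribute a term diverging like $p^{-1}$. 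An inactive edge contributes the factor $1$, so each edge independently equals $1+x_ix_j\,t$ with probability $p$ and $1$ otherwise, and taking expectations edge by edge gives the \emph{exact} product
\[
E_{\varepsilon,N}\!\left(W_N(\mathbf{x})\right)=\prod_{i,j}\left(1+p\,t\,x_ix_j\right).
\]

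Passing to logarithms and using $x_ix_j=\pm1$ gives the closed identity $\log(1+pt\sigma)=\tfrac12\log\!\left(1-(pt)^2\right)+\sigma\,\mathrm{arctanh}(pt)$, so summing over the $N^2$ ordered pairs and recalling $\sum_{i,j}x_ix_j=s_N^2(\mathbf{x})$ yields
\[
\log E_{\varepsilon,N}\!\left(W_N(\mathbf{x})\right)=\frac{N^2}{2}\log\!\left(1-(pt)^2\right)+\mathrm{arctanh}(pt)\,s_N^2(\mathbf{x}).
\]
The rest is asymptotics. Since $Np\to\infty$ forces $a\to0$, one has $pt=\tfrac{\beta}{2N}\bigl(1+o(1)\bigr)$, hence $\mathrm{arctanh}(pt)=\tfrac{\beta}{2N}\bigl(1+o(1)\bigr)$ and $\tfrac{N^2}{2}\log(1-(pt)^2)=-\tfrac{N^2}{2}(pt)^2\bigl(1+o(1)\bigr)=-\tfrac{\beta^2}{8}+o(1)$, with all $o(1)$ deterministic and independent of $\mathbf{x}$. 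This is the single-configuration formula of part (2). For part (1), I would insert $s_N^2(\mathbf{x})\le N(Np)^m$ to absorb $\bigl(\mathrm{arctanh}(pt)-\tfrac{\beta}{2N}\bigr)s_N^2(\mathbf{x})$ into $c_{N,1}$; tracking orders shows this correction is $O\!\bigl((Np)^{m-2}\bigr)$, which vanishes uniformly because $0<m<1$ and $Np\to\infty$.

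For the pair statements the same factorisation applies to the common graph: with $\sigma_1\coloneqq x_{1,i}x_{1,j}$ and $\sigma_2\coloneqq x_{2,i}x_{2,j}$, an active edge contributes $(1+\sigma_1 t)(1+\sigma_2 t)$, whose Bernoulli expectation is $1+pt(\sigma_1+\sigma_2)+pt^2\sigma_1\sigma_2$. I would expand $\log(1+u)=u-\tfrac12u^2+\cdots$ with $u_{ij}=pt(\sigma_1+\sigma_2)+pt^2\sigma_1\sigma_2$ and sum, using $(\sigma_1+\sigma_2)^2=2+2\sigma_1\sigma_2$, $\sum_{i,j}(\sigma_1+\sigma_2)=s_N^2(\mathbf{x}_{N,1})+s_N^2(\mathbf{x}_{N,2})$ and $\sum_{i,j}\sigma_1\sigma_2=\bigl(\sum_i x_{1,i}x_{2,i}\bigr)^2$. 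The linear part produces the coefficient $pt=\tfrac{\beta}{2N}+o(1)$ on $s_N^2(\mathbf{x}_{N,1})+s_N^2(\mathbf{x}_{N,2})$; the quadratic term $-\tfrac12\sum u^2$ produces the constant $-p^2t^2N^2=-\tfrac{\beta^2}{4}+o(1)$; and the cross term collects to the coefficient $pt^2-p^2t^2=\tfrac{1}{2N}\,o(1)$ in front of $\bigl(\sum_i x_{1,i}x_{2,i}\bigr)^2$, since $pt^2\approx\tfrac{\beta^2}{4N^2p}$ with $Np\to\infty$. This is the part-(2) pair formula; for part (1) the bounds $s_N^2(\mathbf{x}_{N,k})\le N(Np)^m$ and $\bigl(\sum_i x_{1,i}x_{2,i}\bigr)^2\le N(Np)^m$ render all three corrections uniform $o(1)$, which defines $c_{N,2}$.

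The genuine difficulty is not the algebra but the \emph{uniformity} of the error control in part (1): I must verify that the higher-order tail of $\log(1+u)$ and the deviations of $\mathrm{arctanh}(pt)$, $\log(1-(pt)^2)$ and $pt^2$ from their leading orders, once multiplied by the $\mathbf{x}$-dependent factors $s_N^2$ and $\bigl(\sum_i x_{1,i}x_{2,i}\bigr)^2$ (each bounded by $N(Np)^m$), tend to zero uniformly over $\mathcal{X}_{T,N}$ and $\mathcal{X}_{T,N}^{(2)}$. This reduces to showing that powers such as $(Np)^{m-1}$ and $(Np)^{m-2}$ vanish, which follows from $0<m<1$ and $Np\to\infty$, while $|u_{ij}|\to0$ uniformly guarantees that the logarithm's remainder is dominated by its quadratic term.
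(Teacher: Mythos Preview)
The paper does not actually prove this lemma: it explicitly says that part~(1) restates Lemmata~3.1--3.2 of \cite{Kabluchko_2020} and that part~(2) is implicit in those proofs. There is therefore no in-paper argument to compare against. Your sketch is essentially the computation carried out in the cited reference: exploit independence of the $\varepsilon_{N,i,j}$ to take the Bernoulli expectation edge by edge, obtain an exact product $\prod_{i,j}(1+pt\,x_ix_j)$ (respectively $\prod_{i,j}(1+pt(\sigma_1+\sigma_2)+pt^2\sigma_1\sigma_2)$), pass to logarithms, and Taylor-expand in the small parameter $pt=\tfrac{\beta}{2N}(1+o(1))$. Your bookkeeping of the uniform errors on the typical sets --- showing the corrections are $O((Np)^{m-2})$ or $O((Np)^{m-1})$ and hence $o(1)$ for $0<m<1$ --- is correct and is exactly why the constraint on $m$ enters.

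One point to flag: you have silently replaced the paper's denominator $\cosh\!\bigl(\tfrac{\beta}{2Np}\bigr)\exp\!\bigl(\sum_{i,j}\varepsilon_{N,i,j}\bigr)$ by $\cosh(a)^{\sum_{i,j}\varepsilon_{N,i,j}}$. Your version is the normalisation that makes the factorisation $\mu_N^{(BG)}(\mathbf{x})=\cosh(a)^{\sum\varepsilon}\prod_{\varepsilon=1}(1+x_ix_jt)$ strip cleanly, and it is the one that produces the $-\tfrac{\beta^2}{8}$ constant appearing in part~(1) (note that the paper's own part~(2) first display has $+\tfrac{\beta^2}{8}$, inconsistent with part~(1)). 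If the paper's denominator is taken literally, an extra $\mathbf{x}$-independent random factor appears and the stated right-hand sides do not follow; so either the paper intends your reading or is recording the formulae in a slightly different convention than \cite{Kabluchko_2020}. Apart from this notational issue, your argument is sound and matches the cited source.
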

The following lemma is proven as corollary 4.15 in \cite{Elstrodt_2011}: 
\begin{lem}
\label{prop:stochastic_convergence_sure_subsequences}Let $\left(\Omega,\mathcal{A},\mathbb{P}\right)$
be a probability space, let $Y_{n},Y:\left(\Omega,\mathcal{A}\right)\rightarrow\left(\mathbb{R},\mathcal{B}\right)$
be real-valued random variables for all $n\in\mathbb{N}$. Then the
following two statements are equivalent:

1. $Y_{n}\overset{\mathcal{\mathbb{P}}}{\longrightarrow}Y$.

2. Every subsequence of $\left(Y_{n}\right)_{n\in\mathbb{N}}$ has
a subsequence that converges to $Y$ almost surely.
\end{lem}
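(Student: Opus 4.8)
The plan is to prove the two implications separately, using the first Borel--Cantelli lemma for the forward direction $(1)\Rightarrow(2)$ and a short contradiction argument for the reverse direction $(2)\Rightarrow(1)$, relying only on two standard facts: convergence in probability is inherited by every subsequence, and almost sure convergence implies convergence in probability.

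For $(1)\Rightarrow(2)$, I would begin from an arbitrary subsequence $\left(Y_{n_k}\right)_{k\in\mathbb{N}}$. Since $Y_n\overset{\mathbb{P}}{\longrightarrow}Y$, the same holds along this subsequence. The key step is to extract an almost surely convergent further subsequence: for each $j\in\mathbb{N}$, convergence in probability allows me to select an index $n_{k_j}$, chosen strictly increasing in $j$, satisfying $\mathbb{P}\left(\left|Y_{n_{k_j}}-Y\right|>1/j\right)<2^{-j}$. Because $\sum_{j}2^{-j}<\infty$, the first Borel--Cantelli lemma shows that almost surely only finitely many of the events $\left\{\left|Y_{n_{k_j}}-Y\right|>1/j\right\}$ occur, whence $Y_{n_{k_j}}\longrightarrow Y$ almost surely. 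This yields statement $(2)$.

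For $(2)\Rightarrow(1)$, I would argue by contradiction. If $Y_n$ does not converge to $Y$ in probability, then there exist $\varepsilon>0$, $\delta>0$, and a subsequence $\left(Y_{n_k}\right)_{k\in\mathbb{N}}$ with $\mathbb{P}\left(\left|Y_{n_k}-Y\right|>\varepsilon\right)\geq\delta$ for all $k$. By hypothesis, this subsequence admits a further subsequence $\left(Y_{n_{k_j}}\right)_{j\in\mathbb{N}}$ converging to $Y$ almost surely, hence also in probability, so $\mathbb{P}\left(\left|Y_{n_{k_j}}-Y\right|>\varepsilon\right)\longrightarrow 0$. This contradicts the uniform lower bound $\delta$, and therefore $Y_n\overset{\mathbb{P}}{\longrightarrow}Y$.

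The only delicate bookkeeping, and thus the point requiring the most care, is in the forward direction: the almost surely convergent subsequence must be extracted from the already-fixed subsequence $\left(Y_{n_k}\right)$ rather than from the original sequence, and the selected indices must be forced strictly increasing so that $\left(Y_{n_{k_j}}\right)_{j}$ is genuinely a subsequence of it. Once this is arranged, both implications reduce to the two elementary facts quoted above.
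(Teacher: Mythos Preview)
Your proof is correct and is the standard argument for this classical equivalence. Note, however, that the paper does not actually prove this lemma: it simply cites it as corollary 4.15 in \cite{Elstrodt_2011}. Your Borel--Cantelli extraction for $(1)\Rightarrow(2)$ and contradiction via the implication ``almost sure $\Rightarrow$ in probability'' for $(2)\Rightarrow(1)$ are exactly the textbook route one would find in such a reference, so there is nothing to compare beyond observing that you have supplied the proof the paper omits.
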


\subsection{Technical preparation for the proofs of proposition \ref{lem:moment convergence-Np} }

The following lemmata and proofs generalise the approach taken in
\cite{kabluchko2019fluctuations,Kabluchko_2020}. 
\begin{defn}
$S_{N}\coloneqq\left\{ -N,-N+2,..,N-2,N\right\} $.
\end{defn}
\begin{lem}
\label{lem:exp-delta-1D-bounded} Let $\delta\in\mathbb{R}^{+}$.
Then:

\textup{
\begin{equation}
\underset{\mathbf{x}\in\left(\mathcal{X}_{T,N}\right)^{c}}{\sum}f_{N}^{+}\left(\mathbf{x}\right)e^{\frac{1-\delta}{2N}s_{N}^{2}}=2^{N}o\left(1\right).
\end{equation}
}
\end{lem}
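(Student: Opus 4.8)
The plan is to discard $f_N^{+}$ at once -- it is bounded by $M$, since $|f_N|<M$ -- reduce the left-hand side to $M$ times an exponential moment of an unbiased $\pm1$ random walk restricted to an atypical event, and then exploit the gap between $1-\delta$ and $1$ by splitting it into two pieces.

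\emph{Reduction.} All summands are non-negative and $f_N^{+}<M$, the uniform measure on $\mathcal{X}_N$ pushes $s_N$ forward to the law of $S_N:=\xi_1+\dots+\xi_N$ with $\xi_i$ i.i.d.\ uniform on $\{-1,+1\}$, and $(\mathcal{X}_{T,N})^{c}=\{\mathbf{x}:s_N^2(\mathbf{x})>N(Np)^m\}$. Hence
\[
2^{-N}\sum_{\mathbf{x}\in(\mathcal{X}_{T,N})^{c}}f_N^{+}(\mathbf{x})\,e^{\frac{1-\delta}{2N}s_N^2(\mathbf{x})}\;\le\;M\,\mathbb{E}\!\left[e^{\frac{1-\delta}{2N}S_N^2}\,\mathbf{1}\{S_N^2>N(Np)^m\}\right],
\]
and it suffices to show the right-hand expectation tends to $0$.

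\emph{Splitting the exponent.} Put $\delta':=\delta/2>0$, so $\frac{1-\delta}{2N}=\frac{1-\delta'}{2N}-\frac{\delta'}{2N}$. On the event $\{S_N^2>N(Np)^m\}$ the second factor satisfies $e^{-\frac{\delta'}{2N}S_N^2}\le e^{-\frac{\delta'}{2}(Np)^m}$, whence
\[
\mathbb{E}\!\left[e^{\frac{1-\delta}{2N}S_N^2}\,\mathbf{1}\{S_N^2>N(Np)^m\}\right]\;\le\;e^{-\frac{\delta'}{2}(Np)^m}\,\mathbb{E}\!\left[e^{\frac{1-\delta'}{2N}S_N^2}\right].
\]
It remains to bound the untruncated moment $\mathbb{E}[e^{\frac{1-\delta'}{2N}S_N^2}]$ by a constant not depending on $N$. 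If $\delta\ge 2$ the exponent is $\le 0$ and the moment is $\le 1$; otherwise, with $\lambda:=\frac{1-\delta'}{2N}\in(0,\frac{1}{2N})$, the Gaussian (Hubbard-Stratonovich) identity $e^{\lambda s^2}=\mathbb{E}_Z[e^{\sqrt{2\lambda}\,sZ}]$ for $Z\sim\mathcal{N}(0,1)$, together with $\mathbb{E}[e^{tS_N}]=\cosh^N t$ and $\cosh u\le e^{u^2/2}$, gives
\[
\mathbb{E}\!\left[e^{\lambda S_N^2}\right]=\mathbb{E}_Z\!\left[\cosh^N\!\big(\sqrt{2\lambda}\,Z\big)\right]\le\mathbb{E}_Z\!\left[e^{N\lambda Z^2}\right]=\frac{1}{\sqrt{1-2N\lambda}}=\frac{1}{\sqrt{\delta'}}.
\]
(Equivalently, this is the standard estimate that the Curie-Weiss partition function at inverse temperature $1-\delta'<1$ is at most $2^N/\sqrt{\delta'}$.) Combining the three inequalities yields $2^{-N}\sum_{\mathbf{x}\in(\mathcal{X}_{T,N})^{c}}f_N^{+}(\mathbf{x})\,e^{\frac{1-\delta}{2N}s_N^2}\le \frac{M}{\sqrt{\delta'}}\,e^{-\frac{\delta'}{2}(Np)^m}\longrightarrow 0$, because $Np\to\infty$ and $m>0$ force $(Np)^m\to\infty$; this is exactly the asserted $2^N o(1)$.

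\emph{Main obstacle.} The delicate point is the last estimate: the crude local bound $\mathbb{P}(S_N=k)\le e^{-k^2/(2N)}$ only yields $\mathbb{E}[e^{\frac{1-\delta'}{2N}S_N^2}]=O(\sqrt N)$, and $\sqrt N\,e^{-\frac{\delta'}{2}(Np)^m}$ need not vanish when $Np$ grows very slowly (e.g.\ $Np\sim\log N$). One genuinely needs the $N$-uniform bound on the exponential moment, for which the Gaussian linearization -- equivalently, the sub-critical Curie-Weiss partition-function asymptotics -- is the clean tool; everything else is bookkeeping.
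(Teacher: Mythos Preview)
Your proof is correct and takes a genuinely different route from the paper. The paper replaces $f_N^{+}$ by $M$, invokes the de~Moivre--Laplace local limit theorem to write $\nu_{N,s}=\binom{N}{(s+N)/2}\sim 2^{N}(\pi N/2)^{-1/2}e^{-s^{2}/(2N)}$, combines this Gaussian factor with $e^{(1-\delta)s^{2}/(2N)}$ to obtain $e^{-\delta s^{2}/(2N)}$, and then bounds the resulting sum over $s\ge\sqrt{N(Np)^{m}}$ by the Riemann integral $\int_{\sqrt{(Np)^{m}}}^{\infty}e^{-\delta t^{2}/2}\,dt\to 0$. Your approach instead splits $1-\delta=(1-\delta')-\delta'$, extracts $e^{-(\delta'/2)(Np)^{m}}$ from the atypical event, and then controls the \emph{full} moment $\mathbb{E}[e^{(1-\delta')S_N^{2}/(2N)}]$ uniformly in $N$ via the Hubbard--Stratonovich linearisation and $\cosh u\le e^{u^{2}/2}$. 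What your argument buys is robustness: it needs no local limit theorem (whose $1+o(1)$ is not uniform over all of $S_N$, so the paper's step tacitly relies on a uniform upper bound of the same shape), and it yields an explicit exponential rate. What the paper's argument buys is that it stays purely combinatorial---no auxiliary Gaussian---and dovetails with the same Riemann-sum machinery reused in the two-dimensional Lemma~\ref{lem:exp-delta-3D-bounded} and in the lower bound on $Z_N^{(CW)}$. Your ``main obstacle'' remark is well taken: the paper sidesteps the $O(\sqrt{N})$ loss precisely through the $1/\sqrt{N}$ prefactor from de~Moivre--Laplace, whereas you sidestep it through the sub-critical partition-function bound $\mathbb{E}[e^{(1-\delta')S_N^{2}/(2N)}]\le (\delta')^{-1/2}$.
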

\begin{proof}
For fixed $s\in S_{N}$, let $\nu_{N,s}\coloneqq\left|\left\{ \mathbf{x}\in\mathcal{X}_{N}:s_{N}=s\right\} \right|$,
i.e. the number of spin configurations with $\frac{s+N}{2}$ positive
spins. By the de Moivre-Laplace local limit theorem: 

\begin{equation}
\nu_{N,s}=\left(\begin{array}{c}
N\\
\frac{s+N}{2}
\end{array}\right)=2^{N}\frac{\left(1+o\left(1\right)\right)}{\sqrt{\frac{1}{2}\pi N}}\exp\left(-\frac{s^{2}}{2N}\right).\label{eq:bin_approx}
\end{equation}

Recalling that $f_{N}^{+}\left(\mathbf{x}\right)<M$, we have:
\begin{equation}
\underset{\mathbf{x}\in\left(\mathcal{X}_{T,N}\right)^{c}}{\sum}f_{N}^{+}\left(\mathbf{x}\right)e^{\frac{1-\delta}{2N}s_{N}^{2}}
\end{equation}

\begin{equation}
\leq2^{N+1}\frac{\left(1+o\left(1\right)\right)}{\sqrt{\frac{1}{2}\pi N}}M\underset{\left\{ s\in S_{N}:\sqrt{N\left(Np\right)^{m}}\leq s\leq N\right\} }{\sum}e^{-\delta\frac{s^{2}}{2N}}
\end{equation}

(by eq. (\ref{eq:bin_approx}))

\begin{equation}
\leq2^{N+1}\frac{M}{\sqrt{N}}\underset{\left\{ s\in S_{N}:\sqrt{N\left(Np\right)^{m}}\leq s\leq N\right\} }{\sum}e^{-\delta\frac{s^{2}}{2N}},\label{eq:sum2int-first-expansion}
\end{equation}

for large enough $N$. Now for $t\in\left[s/\sqrt{N},\left(s+2\right)/\sqrt{N}\right[$
:

\begin{equation}
e^{-\delta\frac{\left(s+2\right)^{2}}{2N}}\leq e^{-\delta\frac{t^{2}}{2}},
\end{equation}

and so

\begin{equation}
\frac{2}{\sqrt{N}}e^{-\delta\frac{\left(s+2\right)^{2}}{2N}}\leq\int_{s/\sqrt{N}}^{\left(s+2\right)/\sqrt{N}}e^{-\delta\frac{t^{2}}{2}}dt.\label{eq:sum2int-exp-only}
\end{equation}

Hence, continuing from eq. (\ref{eq:sum2int-first-expansion}):

\begin{equation}
2^{N+1}\frac{M}{\sqrt{N}}\underset{\left\{ s\in S_{N}:\sqrt{N\left(Np\right)^{m}}\leq s\leq N\right\} }{\sum}e^{-\delta\frac{s^{2}}{2N}}
\end{equation}

\begin{equation}
\leq2^{N}M\int_{\sqrt{\left(Np\right)^{m}}}^{\left(N+2\right)/\sqrt{N}}e^{-\delta\frac{t^{2}}{2}}dt
\end{equation}

\begin{equation}
\leq2^{N}M\int_{\sqrt{\left(Np\right)^{m}}}^{\infty}e^{-\delta\frac{t^{2}}{2}}dt\label{eq:expand-range-int-to-inf}
\end{equation}

\begin{equation}
=2^{N}o\left(1\right).
\end{equation}
\end{proof}
\begin{lem}
\label{lem:exp-delta-3D-bounded} Let $\delta\in\mathbb{R}^{+}$.
Then: 

\textup{
\begin{equation}
\underset{\mathbf{\left(x_{1},x_{2}\right)}\in\left(\mathcal{X}_{T,N}^{\left(2\right)}\right)^{c}}{\sum}f_{N}^{+}\left(\mathbf{x_{1}}\right)f_{N}^{+}\left(\mathbf{x_{2}}\right)e^{\frac{1-\delta}{2N}\left(s_{N}^{2}\left(\mathbf{x}_{1}\right)+s_{N}^{2}\left(\mathbf{x}_{2}\right)+\left(\stackrel[i=1]{N}{\sum}x_{i,1}x_{i,2}\right)^{2}\right)}\leq\left(2^{N}o\left(1\right)\right)^{2}.
\end{equation}
}
\end{lem}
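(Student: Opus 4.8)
The plan is to mimic the one-dimensional argument of Lemma~\ref{lem:exp-delta-1D-bounded}, but now in three dimensions, splitting the complement $\left(\mathcal{X}_{T,N}^{\left(2\right)}\right)^{c}$ according to which of the three defining inequalities fails. By a union bound it suffices to bound each of the three pieces
\[
A_{k}\coloneqq\underset{\left(\mathbf{x}_{1},\mathbf{x}_{2}\right)\,:\,(\text{$k$-th inequality fails})}{\sum}f_{N}^{+}\left(\mathbf{x}_{1}\right)f_{N}^{+}\left(\mathbf{x}_{2}\right)e^{\frac{1-\delta}{2N}\left(s_{N}^{2}\left(\mathbf{x}_{1}\right)+s_{N}^{2}\left(\mathbf{x}_{2}\right)+\left(\sum_{i}x_{i,1}x_{i,2}\right)^{2}\right)}
\]
separately by $\left(2^{N}o\left(1\right)\right)^{2}$. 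For $k=1$ (the set where $s_{N}^{2}(\mathbf{x}_{1})>N(Np)^{m}$) I would drop the manifestly helpful negative-in-the-exponent contribution of the cross term $\left(\sum_i x_{i,1}x_{i,2}\right)^2$ only insofar as it is convenient, or rather keep it and note it is nonnegative so it can be bounded by summing over all of $\mathcal{X}_N$ in the $\mathbf{x}_2$ variable after a reparametrisation; more cleanly, I would first fix $\mathbf{x}_1$ and sum over $\mathbf{x}_2$. The key observation is that for fixed $\mathbf{x}_1$, the exponent is a nonnegative-definite quadratic form in the spin sums attached to $\mathbf{x}_2$, but since $\left(\sum_i x_{i,1}x_{i,2}\right)^2 \le s_N^2(\mathbf{x}_1)\cdot$ (something) is not directly useful, the honest route is: for fixed $\mathbf{x}_1$, change variables $y_i \coloneqq x_{1,i}x_{2,i}\in\{-1,+1\}$, which is a bijection of $\mathcal{X}_N$ onto itself; under this change $\sum_i x_{2,i} = \sum_i x_{1,i}y_i$ and $\sum_i x_{1,i}x_{2,i}=\sum_i y_i$, so the pair $(s_N(\mathbf{x}_2), \sum_i x_{1,i}x_{2,i})$ becomes $(\sum_i x_{1,i}y_i, \sum_i y_i)$.

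The cleanest decomposition, which I would actually carry out, is this. On $A_1$ we have $s_N^2(\mathbf{x}_1)>N(Np)^m$ with no constraint on $\mathbf{x}_2$; bound the $\mathbf{x}_2$-sum by summing over all of $\mathcal{X}_N$, and bound $f_N^+ < M$ throughout. For the $\mathbf{x}_2$-sum with $\mathbf{x}_1$ fixed, write $c_1 \coloneqq s_N(\mathbf{x}_1)$; the exponent is $\frac{1-\delta}{2N}\big(c_1^2 + s_N^2(\mathbf{x}_2) + (\sum_i x_{1,i}x_{2,i})^2\big)$. Using $(\sum_i x_{1,i}x_{2,i})^2 \le N^2$ crudely gives a factor $e^{\frac{(1-\delta)N}{2}}$, which is too large by itself; so instead I must keep track of it. The right move: for fixed $\mathbf{x}_1$, the number of $\mathbf{x}_2$ with $s_N(\mathbf{x}_2)=u$ and $\sum_i x_{1,i}x_{2,i}=v$ is a product of two binomial coefficients (split coordinates according to the sign of $x_{1,i}$), and by the local CLT this count is $2^N \cdot O(1/N) \cdot \exp(-\tfrac{1}{2N}\,Q(u,v))$ for a positive-definite quadratic form $Q$ whose minimal eigenvalue exceeds $1$ once $\mathbf{x}_1$ is in the typical set — but here $\mathbf{x}_1$ need not be typical. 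The saving grace is that the prefactor $e^{\frac{1-\delta}{2N}c_1^2}$ is attached to $\mathbf{x}_1$ and, once summed against $\nu_{N,c_1}$ over $|c_1|>\sqrt{N(Np)^m}$ exactly as in Lemma~\ref{lem:exp-delta-1D-bounded}, contributes $2^N o(1)$; the remaining $\mathbf{x}_2$-sum of $e^{\frac{1-\delta}{2N}(s_N^2(\mathbf{x}_2)+(\sum x_{1,i}x_{2,i})^2)}$ is, after the local-CLT substitution and a two-dimensional sum-to-integral comparison (the 2D analogue of \eqref{eq:sum2int-exp-only}), bounded by $2^N \cdot O(1)$ uniformly in $\mathbf{x}_1$, because the combined quadratic form $\tfrac{1}{2}(u^2+v^2) - \tfrac{1-\delta}{2}(u^2+v^2) = \tfrac{\delta}{2}(u^2+v^2)$ in the rescaled variables is positive-definite (here one uses that $s_N^2(\mathbf{x}_2)+(\sum x_{1,i}x_{2,i})^2 \ge$ the relevant form, or more precisely that $Q(u,v)\ge u^2$ so that after subtracting $(1-\delta)$ times the visible form one still controls the tail). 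Multiplying the two bounds gives $A_1 \le 2^N o(1)\cdot 2^N O(1) = \left(2^N o(1)\right)^2$ — wait, one $o(1)$ suffices for the stated bound, and symmetrically $A_2 \le \left(2^N o(1)\right)^2$.

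For $A_3$ (the set where $(\sum_i x_{1,i}x_{2,i})^2 > N(Np)^m$ but both $s_N^2(\mathbf{x}_j)$ may be small) I would fix $\mathbf{x}_1$, perform the bijection $y_i = x_{1,i}x_{2,i}$, and observe that the constraint becomes $(\sum_i y_i)^2 > N(Np)^m$ — exactly the one-dimensional typicality condition on $\mathbf y$. Then $\sum_{\mathbf y}$ of $e^{\frac{1-\delta}{2N}((\sum y_i)^2 + \dots)}$ with the extra $s_N^2(\mathbf{x}_1)$ and $s_N^2(\mathbf{x}_2)=(\sum_i x_{1,i}y_i)^2$ terms; bounding $s_N^2(\mathbf{x}_2)\le N(Np)^m$ is \emph{not} available here, so again I keep the local-CLT count of $\{\mathbf y : \sum y_i = v, \sum x_{1,i}y_i = u\}$, and the $v$-sum restricted to $|v|>\sqrt{N(Np)^m}$ supplies the decisive $o(1)$ via the 2D tail integral $\int_{\sqrt{(Np)^m}}^\infty\!\!\int_{\mathbb R} e^{-\frac{\delta}{2}(\text{pos.\ def.\ form})}$, while the $\mathbf{x}_1$-sum contributes the second factor of $2^N O(1)$ (or $2^N o(1)$, even better). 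I expect \textbf{the main obstacle} to be the bookkeeping of this two-dimensional local limit theorem with a non-diagonal covariance that depends on $\mathbf{x}_1$ through the imbalance $\#\{i: x_{1,i}=+1\}$, and verifying that after subtracting $(1-\delta)$ times the ``visible'' quadratic form the residual form stays uniformly positive-definite (equivalently, that $\delta>0$ is enough margin) so that the sum-to-integral comparison and the tail estimate go through uniformly in $\mathbf{x}_1$; once that uniformity is in hand, the three pieces are each $\le \left(2^N o(1)\right)^2$ and the union bound finishes the proof.
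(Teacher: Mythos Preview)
Your route is substantially more complicated than the paper's, and the obstacle you yourself flag is real and not overcome by your sketch.

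\textbf{What the paper does.} The paper does not split into the three pieces $A_1,A_2,A_3$ and never conditions on $\mathbf{x}_1$. Instead it invokes a single three–dimensional count bound from \cite{Kabluchko_2020}: for $\nu_N(s,t,u)\coloneqq\bigl|\{(\mathbf{x}_1,\mathbf{x}_2)\in\mathcal{X}_N^2 : s_N(\mathbf{x}_1)=s,\ s_N(\mathbf{x}_2)=t,\ \sum_i x_{1,i}x_{2,i}=u\}\bigr|$ there is a constant $C$ with
\[
\nu_N(s,t,u)\;<\;2^{2N}\,\frac{C}{N^{3/2}}\,\exp\!\Bigl(-\tfrac{s^2+t^2+u^2}{2N}\Bigr).
\]
The point is that under the uniform measure on pairs, the three quantities $s_N(\mathbf{x}_1)$, $s_N(\mathbf{x}_2)$, $\sum_i x_{1,i}x_{2,i}$ are pairwise \emph{uncorrelated}, so the local CLT produces a \emph{diagonal} quadratic form. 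After bounding $f_N^+\le M$, grouping by $(s,t,u)$, and multiplying this count by $e^{\frac{1-\delta}{2N}(s^2+t^2+u^2)}$, one is left with $2^{2N}C M^2 N^{-3/2}\sum_{W_N}e^{-\frac{\delta}{2N}(s^2+t^2+u^2)}$, and the same sum-to-integral comparison as in the one–dimensional lemma finishes.

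\textbf{Where your sketch breaks.} When you fix $\mathbf{x}_1$ and look at $(u,v)=(s_N(\mathbf{x}_2),\sum_i x_{1,i}x_{2,i})$, the conditional covariance is $\Sigma=\bigl(\begin{smallmatrix}N & s_1\\ s_1 & N\end{smallmatrix}\bigr)$ with $s_1=s_N(\mathbf{x}_1)$, so $\Sigma^{-1}$ has eigenvalues $1/(N\pm s_1)$. The residual matrix you need negative definite is $\tfrac{1-\delta}{N}I-\Sigma^{-1}$, whose larger eigenvalue equals $\tfrac{1-\delta}{N}-\tfrac{1}{N+|s_1|}$; this is \emph{strictly positive} as soon as $|s_1|>\tfrac{\delta}{1-\delta}N$. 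Thus the ``residual form stays uniformly positive-definite'' hope is false, and for such $\mathbf{x}_1$ your inner $\mathbf{x}_2$–sum is not $2^N\,O(1)$ but exponentially large in $N$. One can try to rescue this by a separate large-deviation treatment of the regime $|s_1|\ge cN$, but that is additional work you have not outlined, and it is exactly what the paper's unconditional three-dimensional bound sidesteps. The cleaner fix is to drop the conditioning entirely and use the diagonal $\nu_N(s,t,u)$ bound.
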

\begin{proof}
\label{def:V_N-nu_N}For $s,t,u\in\mathbb{\mathbb{Z}}_{N}$, define
$\nu_{N}\left(s,t,u\right)\coloneqq\left|\left\{ \mathbf{\left(x_{1},x_{2}\right)}\in\mathcal{X}_{N}^{2}:s_{N}\left(\mathbf{x}_{1}\right)=s,s_{N}\left(\mathbf{x}_{2}\right)=t,\stackrel[i=1]{N}{\sum}x_{i,1}x_{i,2}=u\right\} \right|.$
Then there exists a constant $C\in\mathbb{R}$ such that $\nu_{N}\left(s,t,u\right)<2^{2N}\frac{C}{N^{3/2}}\exp\left(-\frac{s^{2}}{2N}-\frac{t^{2}}{2N}-\frac{u^{2}}{2N}\right)$
(proof in \cite{Kabluchko_2020}). Now let $W_{N}\coloneqq\left\{ \left(s,t,u\right)\in S_{N}^{3}:s\geq\sqrt{N\left(Np\right)^{m}}\vee t\geq\sqrt{N\left(Np\right)^{m}}\vee u\geq\sqrt{N\left(Np\right)^{m}}\right\} .$
Then:
\begin{equation}
\underset{\mathbf{\left(x_{1},x_{2}\right)}\in\left(\mathcal{X}_{T,N}^{\left(2\right)}\right)^{c}}{\sum}f_{N}^{+}\left(\mathbf{x_{1}}\right)f_{N}^{+}\left(\mathbf{x_{2}}\right)e^{\frac{1-\delta}{2N}\left(s_{N}^{2}\left(\mathbf{x}_{1}\right)+s_{N}^{2}\left(\mathbf{x}_{2}\right)+\left(\stackrel[i=1]{N}{\sum}x_{i,1}x_{i,2}\right)^{2}\right)}
\end{equation}

\begin{equation}
\leq2M^{2}\underset{\left(s,t,u\right)\in W_{N}}{\sum}\nu_{N}\left(s,t,u\right)e^{\frac{1-\delta}{2N}\left(s^{2}+t^{2}+u^{2}\right)}
\end{equation}

\begin{equation}
<M^{2}2^{2N+1}\frac{C}{N^{3/2}}\underset{\left(s,t,u\right)\in W_{N}}{\sum}e^{-\frac{\delta}{2N}\left(s^{2}+t^{2}+u^{2}\right)}\label{eq:3D-approx-first-expansion}
\end{equation}

\begin{equation}
=\left(2^{N}o\left(1\right)\right)^{2}
\end{equation}

(using eq. (\ref{eq:sum2int-exp-only}) for $s$, $t$, and $u$ and
expanding the range of integration as in eq. (\ref{eq:expand-range-int-to-inf})).
\end{proof}
\begin{lem}
\label{lem:Z(CW)-order}There exists some $C\in\mathbb{R}^{+}$ independent
of $N$ such that

\[
Z_{N}^{\left(CW\right)}\geq2^{N}C.
\]
\end{lem}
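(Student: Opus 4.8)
The plan is to bound $Z_N^{(CW)}=\sum_{\mathbf{x}\in\mathcal{X}_N}e^{\frac{\beta}{2N}s_N^2(\mathbf{x})}$ from below by keeping only a constant fraction of the $2^N$ spin configurations, chosen so that the exponential weight on the retained configurations is bounded below uniformly in $N$. Concretely, I would work with the set $A_N\coloneqq\{\mathbf{x}\in\mathcal{X}_N:s_N^2(\mathbf{x})\le N\}$. On $A_N$ we have $\frac{\beta}{2N}s_N^2(\mathbf{x})\ge-\frac{|\beta|}{2}$, so $e^{\frac{\beta}{2N}s_N^2(\mathbf{x})}\ge e^{-|\beta|/2}$, and hence $Z_N^{(CW)}\ge e^{-|\beta|/2}\,|A_N|$. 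It then remains to show $|A_N|\ge c\,2^N$ for some fixed $c>0$ and all large $N$.

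For the latter, I would invoke the de Moivre--Laplace local limit theorem exactly as in the proof of Lemma \ref{lem:exp-delta-1D-bounded}. Letting $\nu_{N,s}$ denote the number of configurations with $s_N(\mathbf{x})=s$, eq. (\ref{eq:bin_approx}) gives $\nu_{N,s}=2^N\frac{1+o(1)}{\sqrt{\pi N/2}}e^{-s^2/(2N)}$ uniformly in $s$, so every term with $|s|\le\sqrt N$ is at least $2^N\frac{1+o(1)}{\sqrt{\pi N/2}}e^{-1/2}$; since there are of order $\sqrt N$ values of $s\in S_N$ with $|s|\le\sqrt N$, summing gives $|A_N|\ge 2^N\,\sqrt{2/\pi}\,e^{-1/2}(1+o(1))$, which exceeds $2^{N}/4$ for all large $N$. (Equivalently, $|A_N|/2^N=\mathbb{P}(|S_N|\le\sqrt N)\to\mathbb{P}(|Z|\le1)>1/2$ by the ordinary central limit theorem for a sum $S_N$ of $N$ i.i.d.\ uniform $\pm1$ variables and $Z\sim\mathcal N(0,1)$.)

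Combining the two steps yields $Z_N^{(CW)}\ge\tfrac14 e^{-|\beta|/2}\,2^N$ for all $N\ge N_0$; for the finitely many $N<N_0$ the number $Z_N^{(CW)}/2^N$ is a strictly positive real, so taking $C$ to be the smaller of $\tfrac14 e^{-|\beta|/2}$ and the minimum of $Z_N^{(CW)}/2^N$ over $N<N_0$ proves the claim. I do not anticipate any genuine difficulty: the only step needing an argument is the lower bound on $|A_N|$, and that reuses the estimate already in the paper. In fact, in the ferromagnetic case $\beta\ge0$ the statement is immediate with $C=1$, since then every summand in $Z_N^{(CW)}$ is at least $1$; the argument above is only needed if one wishes to cover $\beta<0$ as well.
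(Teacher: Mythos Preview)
Your argument is correct. Both you and the paper rely on the de Moivre--Laplace local limit theorem (eq.~\eqref{eq:bin_approx}), but you use it differently. The paper combines the Gaussian factor $e^{-s^2/(2N)}$ from \eqref{eq:bin_approx} with the Boltzmann factor $e^{\beta s^2/(2N)}$ to obtain $e^{(\beta-1)s^2/(2N)}$, then bounds the full sum over all $s\in S_N$ below by a Riemann integral, arriving at the explicit constant $C=\tfrac14\sqrt{\pi/(2(1-\beta))}$. You instead truncate to $A_N=\{|s_N|\le\sqrt N\}$, bound the Boltzmann factor uniformly on $A_N$ by $e^{-|\beta|/2}$, and then use only a counting estimate $|A_N|\ge c\,2^N$.

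Your route is shorter and avoids the sum-to-integral comparison entirely; the paper's route yields a constant that records the $\beta$-dependence via the familiar $(1-\beta)^{-1/2}$ and uses $\beta<1$ explicitly. Your closing remark is the most telling: in the ferromagnetic regime $\beta\ge0$ (which is the setting of the paper) every summand in $Z_N^{(CW)}$ is at least $1$, so $Z_N^{(CW)}\ge2^N$ trivially with $C=1$, and neither argument is needed. Your proof is thus strictly more general in that it covers $\beta<0$ without needing $\beta<1$, while the paper's proof needs $\beta<1$ for the integrability step.
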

\begin{proof}
We have:
\begin{equation}
Z_{N}^{\left(CW\right)}=\underset{\mathbf{x}\in\mathcal{X}_{N}}{\sum}e^{\frac{\beta}{2N}s_{N}^{2}\left(\mathbf{x}\right)}
\end{equation}

\begin{equation}
\geq\frac{2^{N}}{\sqrt{N}}\underset{\left\{ s\in S_{N}:0\leq s\leq N\right\} }{\sum}e^{\left(\beta-1\right)\frac{s^{2}}{2N}},\label{eq:Z_first_expansion}
\end{equation}

for large enough $N$, using eq. (\ref{eq:bin_approx}). Because $\beta-1<0$,
we have, for $t\in\left[s/\sqrt{N},\left(s+2\right)/\sqrt{N}\right[$
:

\begin{equation}
e^{\left(\beta-1\right)\frac{s^{2}}{2N}}\geq e^{\left(\beta-1\right)\frac{t^{2}}{2}},
\end{equation}

and so

\begin{equation}
\frac{2}{\sqrt{N}}e^{\left(\beta-1\right)\frac{s^{2}}{2N}}\geq\int_{s/\sqrt{N}}^{\left(s+2\right)/\sqrt{N}}e^{\left(\beta-1\right)\frac{t^{2}}{2}}dt.
\end{equation}

Hence, continuing from eq. (\ref{eq:Z_first_expansion}):

\begin{equation}
\frac{2^{N}}{\sqrt{N}}\underset{\left\{ s\in S_{N}:0\leq s\leq N\right\} }{\sum}e^{\left(\beta-1\right)\frac{s^{2}}{2N}}
\end{equation}

\begin{equation}
\geq2^{N-1}\int_{0}^{\left(N+2\right)/\sqrt{N}}e^{\left(\beta-1\right)\frac{t^{2}}{2}}dt
\end{equation}

\begin{equation}
\geq2^{N}\frac{1}{4}\sqrt{\frac{\pi}{2\left(1-\beta\right)}},
\end{equation}

for large enough $N$, since $erf\left(x\right)\longrightarrow1>\nicefrac{1}{2}$
for $x\longrightarrow\infty$. 
\end{proof}

\section{Proof of the main results}

\subsection{Convergence of integral quotients}

\subsubsection{Definitions and propositions}
\begin{defn}
\label{def:functionals}For ease of notation, we define
\begin{equation}
E_{N}^{\left(\mu,BG\right)}\left(Y_{N}\right)\coloneqq\underset{\mathbf{x}\in\mathcal{X}_{N}}{\sum}Y_{N}\left(\mathbf{x}\right)\mu_{N}^{\left(BG\right)}\left(\mathbf{x}\right),
\end{equation}

\begin{equation}
E_{N}^{\left(P,BG\right)}\left(Y_{N}\right)\coloneqq\underset{\mathbf{x}\in\mathcal{X}_{N}}{\sum}Y_{N}\left(\mathbf{x}\right)\mathbb{P}{}_{N}^{\left(BG\right)}\left(\mathbf{x}\right)=E_{N}^{\left(\mu,BG\right)}\left(Y_{N}\right)/E_{N}^{\left(\mu,BG\right)}\left(1\right),
\end{equation}

and similarly for $E_{N}^{\left(\mu,CW\right)}\left(Y_{N}\right)$
and $E_{N}^{\left(P,CW\right)}\left(Y_{N}\right)$. Furthermore, we
define

\[
R_{N}\left(f_{N}^{+}\right)\coloneqq\begin{cases}
\frac{E_{N}^{\left(\mu,BG\right)}\left(f_{N}^{+}\right)}{\cosh\left(\frac{\beta}{2Np}\right)\exp\left(-\frac{\beta^{2}}{8}+\stackrel[i,j]{N}{\sum}\varepsilon_{N,i,j}\right)E_{N}^{\left(\mu,CW\right)}\left(f_{N}^{+}\right)} & \left(E_{N}^{\left(\mu,CW\right)}\left(f_{N}^{+}\right)\neq0\right)\\
1 & \left(E_{N}^{\left(\mu,CW\right)}\left(f_{N}^{+}\right)=0\right)
\end{cases}
\]

\[
T_{N}\left(f_{N}^{+}\right)\coloneqq\frac{E_{N}^{\left(\mu,BG\right)}\left(f_{N}^{+}\right)}{\cosh\left(\frac{\beta}{2Np}\right)\exp\left(-\frac{\beta^{2}}{8}+\stackrel[i,j]{N}{\sum}\varepsilon_{N,i,j}\right)Z_{N}^{\left(\mu,CW\right)}}
\]
\end{defn}
\begin{rem}
$E_{N}^{\left(\mu,CW\right)}\left(f_{N}^{+}\right)=0$ if and only
if $f_{N}^{+}$ vanishes everywhere on $\mathcal{X}_{N}$, in which
case, $E_{N}^{\left(\mu,BG\right)}\left(f_{N}^{+}\right)=E_{N}^{\left(\mu,CW\right)}\left(f_{N}^{+}\right)=0$. 
\end{rem}
\begin{prop}
\label{lem:moment convergence-Np}\vphantom{}

(1) If $\underset{N\rightarrow\infty}{\lim}E_{N}^{\left(P,CW\right)}\left(f_{N}^{+}\right)>0$
exists, then $R_{N}\left(f_{N}^{+}\right)\overset{\mathcal{\mathbb{P}_{\varepsilon}}}{\longrightarrow}1.$

(2) If $\underset{N\rightarrow\infty}{\lim}E_{N}^{\left(P,CW\right)}\left(f_{N}^{+}\right)=0$,
then $T_{N}\left(f_{N}^{+}\right)\overset{\mathcal{\mathbb{P}_{\varepsilon}}}{\longrightarrow}0.$
\end{prop}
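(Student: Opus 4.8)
The plan is to exploit a cancellation hidden in the definitions of $R_N$ and $T_N$. Set
\[
A_N(\mathbf{x})\coloneqq\frac{\mu_N^{(BG)}(\mathbf{x})}{\cosh\left(\frac{\beta}{2Np}\right)\exp\left(\sum_{i,j}^{N}\varepsilon_{N,i,j}\right)},\qquad B_N\coloneqq\sum_{\mathbf{x}\in\mathcal{X}_N}f_N^{+}(\mathbf{x})\,A_N(\mathbf{x})\ge 0.
\]
Since $E_N^{(\mu,BG)}(f_N^{+})=\cosh(\tfrac{\beta}{2Np})\exp(\sum\varepsilon_{N,i,j})\,B_N$, the random prefactor cancels and one gets $R_N(f_N^{+})=e^{\beta^{2}/8}B_N/E_N^{(\mu,CW)}(f_N^{+})$ and $T_N(f_N^{+})=e^{\beta^{2}/8}B_N/Z_N^{(CW)}$. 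So everything reduces to controlling the graph-random scalar $B_N$ through its first two $\mathbb{P}_\varepsilon$-moments, for which Lemma \ref{thm:Kabluchko-beta-1} supplies the expressions for $E_{\varepsilon,N}(A_N(\mathbf{x}))$ and $E_{\varepsilon,N}(A_N(\mathbf{x}_1)A_N(\mathbf{x}_2))$. Fix once and for all a $\delta>0$ with $\beta<1-\delta$, possible because $\beta<1$.

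For the first moment, split $B_N=B_N^{T}+B_N^{c}$ according to whether $\mathbf{x}\in\mathcal{X}_{T,N}$ or $\mathbf{x}\in(\mathcal{X}_{T,N})^{c}$. On $\mathcal{X}_{T,N}$, part (1) of Lemma \ref{thm:Kabluchko-beta-1} gives $E_{\varepsilon,N}(B_N^{T})=e^{-\beta^{2}/8}e^{c_{N,1}}\sum_{\mathbf{x}\in\mathcal{X}_{T,N}}f_N^{+}(\mathbf{x})\mu_N^{(CW)}(\mathbf{x})$; since $\mu_N^{(CW)}(\mathbf{x})=e^{\frac{\beta}{2N}s_N^{2}}\le e^{\frac{1-\delta}{2N}s_N^{2}}$, Lemma \ref{lem:exp-delta-1D-bounded} makes the omitted part of that sum $2^{N}o(1)$, so $E_{\varepsilon,N}(B_N^{T})=e^{-\beta^{2}/8}e^{c_{N,1}}\big(E_N^{(\mu,CW)}(f_N^{+})-2^{N}o(1)\big)$. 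On the complement, part (2) of the lemma bounds $E_{\varepsilon,N}(A_N(\mathbf{x}))$ by $e^{\beta^{2}/8+o(1)}e^{\frac{1-\delta}{2N}s_N^{2}}$ for $N$ large (using $\beta(1+o(1))<1-\delta$), and a second use of Lemma \ref{lem:exp-delta-1D-bounded} gives $E_{\varepsilon,N}(B_N^{c})=2^{N}o(1)$. For the second moment, expand $E_{\varepsilon,N}(B_N^{2})=\sum_{\mathbf{x}_1,\mathbf{x}_2}f_N^{+}(\mathbf{x}_1)f_N^{+}(\mathbf{x}_2)E_{\varepsilon,N}(A_N(\mathbf{x}_1)A_N(\mathbf{x}_2))$ and split at $\mathcal{X}_{T,N}^{(2)}$: on $\mathcal{X}_{T,N}^{(2)}$, part (1) gives $e^{-\beta^{2}/4}e^{c_{N,2}}\sum_{(\mathbf{x}_1,\mathbf{x}_2)\in\mathcal{X}_{T,N}^{(2)}}f_N^{+}f_N^{+}\mu_N^{(CW)}\mu_N^{(CW)}\le e^{-\beta^{2}/4}e^{c_{N,2}}\big(E_N^{(\mu,CW)}(f_N^{+})\big)^{2}$; on $(\mathcal{X}_{T,N}^{(2)})^{c}$, part (2) together with $\beta<1$ lets one fold the resulting weight into $e^{\frac{1-\delta}{2N}(s_N^{2}(\mathbf{x}_1)+s_N^{2}(\mathbf{x}_2)+(\sum_i x_{i,1}x_{i,2})^{2})}$ and apply Lemma \ref{lem:exp-delta-3D-bounded}, giving at most $e^{o(1)}(2^{N}o(1))^{2}=2^{2N}o(1)$. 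Altogether $E_{\varepsilon,N}(B_N^{2})\le e^{-\beta^{2}/4}e^{c_{N,2}}\big(E_N^{(\mu,CW)}(f_N^{+})\big)^{2}+2^{2N}o(1)$.

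To conclude (1): by hypothesis $E_N^{(P,CW)}(f_N^{+})\to c>0$, so with Lemma \ref{lem:Z(CW)-order} one has $E_N^{(\mu,CW)}(f_N^{+})=E_N^{(P,CW)}(f_N^{+})Z_N^{(CW)}\ge 2^{N}C'$ for large $N$; hence each $2^{N}o(1)$ is $o(1)E_N^{(\mu,CW)}(f_N^{+})$ and each $2^{2N}o(1)$ is $o(1)\big(E_N^{(\mu,CW)}(f_N^{+})\big)^{2}$. The first-moment estimate then gives $e^{\beta^{2}/8}E_{\varepsilon,N}(B_N)/E_N^{(\mu,CW)}(f_N^{+})=e^{c_{N,1}}(1+o(1))\to1$, while combining the two moment estimates with $\big(E_{\varepsilon,N}B_N\big)^{2}\ge e^{-\beta^{2}/4}e^{2c_{N,1}}\big(E_N^{(\mu,CW)}(f_N^{+})\big)^{2}(1-o(1))^{2}$ forces $\operatorname{Var}_{\varepsilon,N}(B_N)/\big(E_{\varepsilon,N}B_N\big)^{2}\to0$, so $B_N/E_{\varepsilon,N}(B_N)\overset{\mathbb{P}_\varepsilon}{\longrightarrow}1$ by Chebyshev; multiplying by the deterministic factor $e^{\beta^{2}/8}E_{\varepsilon,N}(B_N)/E_N^{(\mu,CW)}(f_N^{+})\to1$ yields $R_N(f_N^{+})\overset{\mathbb{P}_\varepsilon}{\longrightarrow}1$. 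To conclude (2): $T_N(f_N^{+})\ge0$, so by Markov it suffices that $E_{\varepsilon,N}(T_N(f_N^{+}))=e^{\beta^{2}/8}E_{\varepsilon,N}(B_N)/Z_N^{(CW)}\to0$; but $E_{\varepsilon,N}(B_N^{T})\le e^{-\beta^{2}/8}e^{c_{N,1}}E_N^{(\mu,CW)}(f_N^{+})=e^{-\beta^{2}/8}e^{c_{N,1}}E_N^{(P,CW)}(f_N^{+})Z_N^{(CW)}$ with $E_N^{(P,CW)}(f_N^{+})\to0$, and $E_{\varepsilon,N}(B_N^{c})/Z_N^{(CW)}\le 2^{N}o(1)/(2^{N}C)\to0$ by Lemma \ref{lem:Z(CW)-order}.

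The main obstacle is the second-moment estimate, and inside it the contribution of the atypical pairs $(\mathbf{x}_1,\mathbf{x}_2)\in(\mathcal{X}_{T,N}^{(2)})^{c}$: one must absorb the Gaussian-type weight coming from part (2) of Lemma \ref{thm:Kabluchko-beta-1} into a strictly sub-Gaussian weight $e^{\frac{1-\delta}{2N}(\cdots)}$ before Lemma \ref{lem:exp-delta-3D-bounded} is applicable, which is exactly where $\beta<1$ is indispensable. A softer but equally essential point is that the remainders $2^{N}o(1)$ and $2^{2N}o(1)$ are negligible only when $E_N^{(\mu,CW)}(f_N^{+})$ is of exact order $2^{N}$; that is precisely the positivity hypothesis of part (1), and its possible failure in part (2) is the reason only the weaker statement $T_N(f_N^{+})\to0$ can be claimed there.
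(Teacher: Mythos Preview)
Your proof is correct and follows essentially the same second-moment strategy as the paper: split into typical and atypical configurations, invoke Lemma~\ref{thm:Kabluchko-beta-1} on each part, absorb the atypical contributions via Lemmas~\ref{lem:exp-delta-1D-bounded} and~\ref{lem:exp-delta-3D-bounded}, and conclude with Chebyshev/Markov. The only noteworthy difference is in part~(2): the paper bounds $E_{\varepsilon,N}(T_N^{2})$ and applies Markov to $T_N^{2}$, whereas you observe that $T_N\ge 0$ and use the first moment directly, which is slightly cleaner and sidesteps the formal issue that the paper cites Lemma~\ref{lem:sq-exp-2} under a hypothesis ($\lim E_N^{(P,CW)}(f_N^{+})>0$) that is not satisfied in part~(2).
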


\subsubsection{Preparation: expanding the expectations}
\begin{lem}
\label{lem:exp-2}If $\underset{N\rightarrow\infty}{\lim}E_{N}^{\left(P,CW\right)}\left(f_{N}^{+}\right)>0$
exists, then there exists a sequence $\left(a_{N,1}\right)_{N\in\mathbb{N}}$
with $a_{N,1}\longrightarrow1$ as $Np\longrightarrow\infty$, such
that $E_{\varepsilon,N}\left(R_{N}\left(f_{N}^{+}\right)\right)\geq a_{N,1}.$
\end{lem}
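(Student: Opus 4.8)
The plan is to compute $E_{\varepsilon,N}(R_N(f_N^{+}))$ exactly, as one finite sum over spin configurations, and then to bound it from below by keeping only the \emph{typical} configurations, where Lemma~\ref{thm:Kabluchko-beta-1}(1) applies. Note first that $R_N(f_N^{+})\ge0$ always — it equals $1$ in the degenerate case and a ratio of nonnegative quantities otherwise — so $E_{\varepsilon,N}(R_N(f_N^{+}))\ge0$, and it is enough to prove the claimed bound for all large $N$, putting $a_{N,1}=0$ for the finitely many initial indices. For all large $N$ the hypothesis yields a constant $c>0$ with $E_N^{(P,CW)}(f_N^{+})\ge c$, so $E_N^{(\mu,CW)}(f_N^{+})=E_N^{(P,CW)}(f_N^{+})\,Z_N^{(CW)}>0$ and $R_N(f_N^{+})$ is given by its first branch. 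Expanding $E_N^{(\mu,BG)}(f_N^{+})=\sum_{\mathbf{x}}f_N^{+}(\mathbf{x})\mu_N^{(BG)}(\mathbf{x})$, and using that the $\varepsilon$-dependent factor $\cosh(\frac{\beta}{2Np})\exp(-\frac{\beta^{2}}{8}+\sum_{i,j=1}^{N}\varepsilon_{N,i,j})$ in the denominator is strictly positive and does not depend on $\mathbf{x}$, linearity of $E_{\varepsilon,N}$ gives
\[
E_{\varepsilon,N}\bigl(R_N(f_N^{+})\bigr)=\frac{1}{E_N^{(\mu,CW)}(f_N^{+})}\sum_{\mathbf{x}\in\mathcal{X}_N}f_N^{+}(\mathbf{x})\,E_{\varepsilon,N}\!\left(\frac{\mu_N^{(BG)}(\mathbf{x})}{\cosh(\frac{\beta}{2Np})\exp\bigl(-\frac{\beta^{2}}{8}+\sum_{i,j=1}^{N}\varepsilon_{N,i,j}\bigr)}\right).
\]
Every summand being nonnegative, I would discard all $\mathbf{x}\notin\mathcal{X}_{T,N}$; for $\mathbf{x}\in\mathcal{X}_{T,N}$, Lemma~\ref{thm:Kabluchko-beta-1}(1) evaluates the inner expectation as $e^{c_{N,1}}\mu_N^{(CW)}(\mathbf{x})$ (the $e^{\beta^{2}/8}$ cancelling the $-\beta^{2}/8$ in the exponent), with $c_{N,1}\to0$ not depending on $\mathbf{x}$, so that
\[
E_{\varepsilon,N}\bigl(R_N(f_N^{+})\bigr)\;\ge\;e^{c_{N,1}}\,\frac{\sum_{\mathbf{x}\in\mathcal{X}_{T,N}}f_N^{+}(\mathbf{x})\mu_N^{(CW)}(\mathbf{x})}{E_N^{(\mu,CW)}(f_N^{+})}\;=\;e^{c_{N,1}}\left(1-\frac{\sum_{\mathbf{x}\in(\mathcal{X}_{T,N})^{c}}f_N^{+}(\mathbf{x})\mu_N^{(CW)}(\mathbf{x})}{E_N^{(\mu,CW)}(f_N^{+})}\right).
\]

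It remains to show the subtracted ratio is $o(1)$. For its numerator, $\mu_N^{(CW)}(\mathbf{x})=e^{\frac{\beta}{2N}s_N^{2}(\mathbf{x})}$ and $\beta<1$, so Lemma~\ref{lem:exp-delta-1D-bounded} with $\delta=1-\beta>0$ gives $\sum_{\mathbf{x}\in(\mathcal{X}_{T,N})^{c}}f_N^{+}(\mathbf{x})\mu_N^{(CW)}(\mathbf{x})=2^{N}o(1)$. For its denominator, $E_N^{(\mu,CW)}(f_N^{+})=E_N^{(P,CW)}(f_N^{+})\,Z_N^{(CW)}\ge cC\,2^{N}$ by $E_N^{(P,CW)}(f_N^{+})\ge c$ and Lemma~\ref{lem:Z(CW)-order}. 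Dividing, the subtracted ratio is $o(1)$, hence
\[
E_{\varepsilon,N}\bigl(R_N(f_N^{+})\bigr)\;\ge\;e^{c_{N,1}}\bigl(1-o(1)\bigr)=:a_{N,1},
\]
and since $c_{N,1}\to0$ and the error tends to $0$ as $Np\to\infty$, we conclude $a_{N,1}\to1$.

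I do not anticipate a genuine obstacle: the statement is essentially a repackaging of Lemmata~\ref{thm:Kabluchko-beta-1}(1), \ref{lem:exp-delta-1D-bounded} and \ref{lem:Z(CW)-order}. The two points needing care are (i) the interchange of $E_{\varepsilon,N}$ with the finite configuration sum and the subsequent restriction to $\mathcal{X}_{T,N}$ — valid precisely because $f_N^{+}\ge0$, $\mu_N^{(BG)}>0$, and the $\varepsilon$-dependent denominator is strictly positive and $\mathbf{x}$-independent, so every discarded term is nonnegative — and (ii) invoking the hypothesis $\lim_N E_N^{(P,CW)}(f_N^{+})>0$ twice: once to land in the non-degenerate branch of $R_N$, and once to turn Lemma~\ref{lem:Z(CW)-order} into the lower bound $cC\,2^{N}$ on $E_N^{(\mu,CW)}(f_N^{+})$ that makes the $2^{N}o(1)$ numerator negligible. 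The only remaining subtlety is bookkeeping: the error term $o(1)$ in the last display should be read as one fixed null sequence, so that $a_{N,1}$ is well defined.
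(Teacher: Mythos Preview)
Your proof is correct and follows essentially the same route as the paper: restrict to typical configurations $\mathcal{X}_{T,N}$ by nonnegativity, apply Lemma~\ref{thm:Kabluchko-beta-1}(1) there, and control the discarded atypical part via Lemmata~\ref{lem:exp-delta-1D-bounded} and~\ref{lem:Z(CW)-order} together with the hypothesis $\lim_N E_N^{(P,CW)}(f_N^{+})>0$. The only cosmetic difference is that the paper bounds $e^{c_{N,1}}\ge e^{-|c_{N,1}|}$ before defining $a_{N,1}$, whereas you keep $e^{c_{N,1}}$ itself; both tend to $1$.
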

\begin{proof}
We split into typical and atypical spin configurations for some fixed
$0<m<1$:

\begin{equation}
E_{\varepsilon,N}\left(\frac{E_{N}^{\left(\mu,BG\right)}\left(f_{N}^{+}\right)}{\cosh\left(\frac{\beta}{2Np}\right)\exp\left(\stackrel[i,j]{N}{\sum}\varepsilon_{N,i,j}\right)}\right)=E_{\varepsilon,N}\left(\underset{\mathbf{x}\in\mathcal{X}_{N}}{\sum}f_{N}^{+}\left(\mathbf{x}\right)\frac{\mu_{N}^{\left(BG\right)}\left(\mathbf{x}\right)}{\cosh\left(\frac{\beta}{2Np}\right)\exp\left(\stackrel[i,j]{N}{\sum}\varepsilon_{N,i,j}\right)}\right)
\end{equation}

\begin{equation}
\geq\underset{\mathbf{x}\in\mathcal{X}_{T,N}}{\sum}f_{N}^{+}\left(\mathbf{x}\right)E_{\varepsilon,N}\left(\frac{\mu_{N}^{\left(BG\right)}\left(\mathbf{x}\right)}{\cosh\left(\frac{\beta}{2Np}\right)\exp\left(\stackrel[i,j]{N}{\sum}\varepsilon_{N,i,j}\right)}\right)
\end{equation}

(because all terms are non-negative)

\begin{equation}
=\underset{\mathbf{x}\in\mathcal{X}_{T,N}}{\sum}f_{N}^{+}\left(\mathbf{x}\right)\exp\left(-\frac{\beta^{2}}{8}+\frac{\beta}{2N}s_{N}^{2}\left(\mathbf{x}\right)+c_{N,1}\right),
\end{equation}

(by lemma \ref{thm:Kabluchko-beta-1})

\begin{equation}
\geq\exp\left(-\left|c_{N,1}\right|\right)e^{-\frac{\beta^{2}}{8}}\underset{\mathbf{x}\in\mathcal{X}_{T,N}}{\sum}f_{N}^{+}\left(\mathbf{x}\right)e^{\frac{\beta}{2N}s_{N}^{2}\left(\mathbf{x}\right)}
\end{equation}

\begin{equation}
=\exp\left(-\left|c_{N,1}\right|\right)e^{-\frac{\beta^{2}}{8}}E_{N}^{\left(\mu,CW\right)}\left(f_{N}^{+}\right)\left(1-\frac{\underset{\mathbf{x}\in\mathcal{X}_{T,N}^{c}}{\sum}f_{N}^{+}\left(\mathbf{x}\right)e^{\frac{\beta}{2N}s_{N}^{2}}}{E_{N}^{\left(P,CW\right)}\left(f_{N}^{+}\right)Z_{N}^{\left(CW\right)}}\right)
\end{equation}

\begin{equation}
\geq\exp\left(-\left|c_{N,1}\right|\right)e^{-\frac{\beta^{2}}{8}}E_{N}^{\left(\mu,CW\right)}\left(f_{N}^{+}\right)\left(1-o\left(1\right)\right),
\end{equation}

(for large enough $N$, using proof assumptions, lemma \ref{lem:exp-delta-1D-bounded}
with $\left(1-\delta\right)=\beta<1$, and lemma \ref{lem:Z(CW)-order})

\begin{equation}
=a_{N,1}e^{-\frac{\beta^{2}}{8}}E_{N}^{\left(\mu,CW\right)}\left(f_{N}^{+}\right),
\end{equation}

\begin{equation}
a_{N,1}\coloneqq\exp\left(-\left|c_{N,1}\right|\right)\left(1-o\left(1\right)\right)\longrightarrow1,\quad Np\longrightarrow\infty.
\end{equation}

The lemma follows with definition \ref{def:functionals}.
\end{proof}
\begin{lem}
\label{lem:sq-exp-2}If $\underset{N\rightarrow\infty}{\lim}E_{N}^{\left(P,CW\right)}\left(f_{N}^{+}\right)>0$
exists, then there exists a sequence $\left(a_{N,2}\right)_{N\in\mathbb{N}}$
with $a_{N,2}\longrightarrow1$ as $Np\longrightarrow\infty$, such
that $E_{\varepsilon,N}\left(R_{N}^{2}\left(f_{N}^{+}\right)\right)\leq a_{N,2}.$
\end{lem}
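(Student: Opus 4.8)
The plan is to carry out the argument of Lemma~\ref{lem:exp-2} at the level of pairs of spin configurations, and to produce an \emph{upper} bound rather than a lower one. Since $L:=\lim_{N\to\infty}E_N^{(P,CW)}(f_N^+)>0$ exists, for all large $N$ we have $E_N^{(\mu,CW)}(f_N^+)\neq 0$, so $R_N(f_N^+)$ is given by the first branch of Definition~\ref{def:functionals}; for the finitely many small $N$ with $E_N^{(\mu,CW)}(f_N^+)=0$ one has $R_N^2(f_N^+)=1$ and there is nothing to prove. First I would square the definition of $R_N$, writing
\[
R_N^2\!\left(f_N^+\right)=\frac{\left(E_N^{(\mu,BG)}\!\left(f_N^+\right)\right)^2}{\cosh^2\!\left(\tfrac{\beta}{2Np}\right)\exp\!\left(-\tfrac{\beta^{2}}{4}+2\stackrel[i,j]{N}{\sum}\varepsilon_{N,i,j}\right)\left(E_N^{(\mu,CW)}\!\left(f_N^+\right)\right)^2},
\]
expand the squared numerator as the double sum $\sum_{\mathbf{x}_1,\mathbf{x}_2\in\mathcal{X}_N}f_N^+(\mathbf{x}_1)f_N^+(\mathbf{x}_2)\,\mu_N^{(BG)}(\mathbf{x}_1)\mu_N^{(BG)}(\mathbf{x}_2)$, pull the deterministic factor $\exp(-\beta^{2}/4)\left(E_N^{(\mu,CW)}(f_N^+)\right)^2$ out of $E_{\varepsilon,N}$, and push $E_{\varepsilon,N}$ inside the finite double sum by linearity; the kernel that then appears, $E_{\varepsilon,N}\big(\mu_N^{(BG)}(\mathbf{x}_1)\mu_N^{(BG)}(\mathbf{x}_2)/(\cosh^2(\tfrac{\beta}{2Np})\exp(2\stackrel[i,j]{N}{\sum}\varepsilon_{N,i,j}))\big)$, is exactly the joint graph-expectation estimated in Lemma~\ref{thm:Kabluchko-beta-1}.

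Next I would split the double sum over $\mathcal{X}_N^2$ into the typical pairs $\mathcal{X}_{T,N}^{(2)}$ and their complement. On $\mathcal{X}_{T,N}^{(2)}$, part~(1) of Lemma~\ref{thm:Kabluchko-beta-1} gives the kernel as $\exp\!\big(-\tfrac{\beta^{2}}{4}+\tfrac{\beta}{2N}(s_N^2(\mathbf{x}_1)+s_N^2(\mathbf{x}_2))+c_{N,2}\big)$ with $c_{N,2}$ independent of $(\mathbf{x}_1,\mathbf{x}_2)$; replacing $c_{N,2}$ by $|c_{N,2}|$ and then enlarging the index set back to all of $\mathcal{X}_N^2$ (legitimate since every summand is non-negative), the typical part of the double sum is at most
\[
e^{-\beta^{2}/4}e^{|c_{N,2}|}\left(\sum_{\mathbf{x}\in\mathcal{X}_N}f_N^+(\mathbf{x})e^{\frac{\beta}{2N}s_N^2(\mathbf{x})}\right)^2=e^{-\beta^{2}/4}e^{|c_{N,2}|}\left(E_N^{(\mu,CW)}(f_N^+)\right)^2,
\]
so after dividing by the deterministic factor it contributes precisely $e^{|c_{N,2}|}\longrightarrow 1$.

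For the complement $\left(\mathcal{X}_{T,N}^{(2)}\right)^c$ I would use part~(2) of Lemma~\ref{thm:Kabluchko-beta-1}. Since $\beta<1$, fix $\delta>0$ with $\beta<1-\delta$; then for $N$ large the coefficient of $s_N^2(\mathbf{x}_1)+s_N^2(\mathbf{x}_2)$ and the coefficient of $\big(\stackrel[i=1]{N}{\sum}x_{i,1}x_{i,2}\big)^2$ in that exponent are each at most $\tfrac{1-\delta}{2N}$, so, pulling the bounded additive $o(1)$ term out as a factor $e^{o(1)}$, the atypical part of the double sum is dominated by $e^{-\beta^{2}/4}e^{o(1)}$ times exactly the sum estimated in Lemma~\ref{lem:exp-delta-3D-bounded}, hence by $e^{-\beta^{2}/4}\big(2^N o(1)\big)^2=e^{-\beta^{2}/4}\,2^{2N}o(1)$. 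Dividing this by the deterministic factor $e^{-\beta^{2}/4}\left(E_N^{(\mu,CW)}(f_N^+)\right)^2$ and using $E_N^{(\mu,CW)}(f_N^+)=E_N^{(P,CW)}(f_N^+)Z_N^{(CW)}\ge\tfrac{1}{2} L C\,2^N$ for $N$ large (Lemma~\ref{lem:Z(CW)-order} together with $L>0$), the atypical contribution to $E_{\varepsilon,N}(R_N^2(f_N^+))$ is $o(1)$. Combining the two parts gives $E_{\varepsilon,N}(R_N^2(f_N^+))\le e^{|c_{N,2}|}+o(1)=:a_{N,2}$ with $a_{N,2}\longrightarrow 1$ as $Np\to\infty$, which is the claim.

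The step I expect to be the main obstacle is the atypical regime: one must read the coefficients of $s_N^2(\mathbf{x}_1)+s_N^2(\mathbf{x}_2)$ and of $\big(\sum_i x_{i,1}x_{i,2}\big)^2$ off part~(2) of Lemma~\ref{thm:Kabluchko-beta-1} and check that, because $\beta<1$, they are genuinely $\le\tfrac{1-\delta}{2N}$ for all large $N$, so that Lemma~\ref{lem:exp-delta-3D-bounded} applies verbatim, and one must confirm that the remaining error terms there are uniform in $(\mathbf{x}_1,\mathbf{x}_2)$ so that $e^{o(1)}$ may be extracted from the sum. Everything else is a mechanical transposition of the proof of Lemma~\ref{lem:exp-2}.
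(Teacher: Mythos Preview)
Your proposal is correct and follows essentially the same route as the paper: square $R_N$, expand as a double sum, split into $\mathcal{X}_{T,N}^{(2)}$ and its complement, use the two parts of Lemma~\ref{thm:Kabluchko-beta-1} respectively, control the atypical part via Lemma~\ref{lem:exp-delta-3D-bounded} and Lemma~\ref{lem:Z(CW)-order} together with $L>0$, and assemble $a_{N,2}=e^{|c_{N,2}|}+o(1)$ (the paper writes the equivalent $a_{N,2}=\exp(|c_{N,2}|)(1+o(1))$). Your caveat about uniformity of the $o(1)$ terms in part~(2) of Lemma~\ref{thm:Kabluchko-beta-1} is well taken and is used implicitly in the paper as well.
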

\begin{proof}
\begin{equation}
E_{\varepsilon,N}\left(\left(\frac{E_{N}^{\left(\mu,BG\right)}\left(f_{N}^{+}\right)}{\cosh\left(\frac{\beta}{2Np}\right)\exp\left(\stackrel[i,j]{N}{\sum}\varepsilon_{N,i,j}\right)}\right)^{2}\right)=E_{\varepsilon,N}\left(\left(\underset{\mathbf{x}\in\mathcal{X}_{N}}{\sum}f_{N}^{+}\left(\mathbf{x}\right)\frac{\mu^{\left(BG\right)}\left(\mathbf{x}\right)}{\cosh\left(\frac{\beta}{2Np}\right)\exp\left(\stackrel[i,j]{N}{\sum}\varepsilon_{N,i,j}\right)}\right)^{2}\right)
\end{equation}

\begin{equation}
=\underset{\mathbf{\left(x_{1},x_{2}\right)}\in\mathcal{X}_{N}^{2}}{\sum}f_{N}^{+}\left(\mathbf{x_{1}}\right)f_{N}^{+}\left(\mathbf{x_{2}}\right)E_{\varepsilon,N}\left(\frac{\mu^{\left(BG\right)}\left(\mathbf{x_{1}}\right)}{\cosh\left(\frac{\beta}{2Np}\right)\exp\left(\stackrel[i,j]{N}{\sum}\varepsilon_{N,i,j}\right)}\frac{\mu^{\left(BG\right)}\left(\mathbf{x_{2}}\right)}{\cosh\left(\frac{\beta}{2Np}\right)\exp\left(\stackrel[i,j]{N}{\sum}\varepsilon_{N,i,j}\right)}\right)
\end{equation}

\[
=\underset{\mathbf{\left(x_{1},x_{2}\right)}\in\mathcal{X}_{T,N}^{\left(2\right)}}{\sum}f_{N}^{+}\left(\mathbf{x_{1}}\right)f_{N}^{+}\left(\mathbf{x_{2}}\right)\exp\left(-\frac{\beta^{2}}{4}+\frac{\beta}{2N}\left(s_{N}^{2}\left(\mathbf{x}_{1}\right)+s_{N}^{2}\left(\mathbf{x}_{2}\right)\right)+c_{N,2}\right)
\]

\[
+\underset{\mathbf{\left(x_{1},x_{2}\right)}\in\left(\mathcal{X}_{T,N}^{\left(2\right)}\right)^{c}}{\sum}f_{N}^{+}\left(\mathbf{x_{1}}\right)f_{N}^{+}\left(\mathbf{x_{2}}\right)
\]

\begin{equation}
\cdot\exp\left(-\frac{\beta^{2}}{4}+o\left(1\right)+\left(\frac{\beta}{2N}+o\left(1\right)\right)\left(s_{N}^{2}\left(\mathbf{x}_{1}\right)+s_{N}^{2}\left(\mathbf{x}_{2}\right)\right)+\frac{o\left(1\right)}{2N}\left(\stackrel[i]{N}{\sum}x_{i,1}x_{i,2}\right)^{2}\right),\label{eq:typical-atypical-expansion-square}
\end{equation}

by lemma \ref{thm:Kabluchko-beta-1}. Because $\beta<1$, there exists
a $\delta\in\mathbb{R}^{+}$ such that for large enough $N$:

\[
\left(\frac{\beta}{2N}+o\left(1\right)\right)\left(s_{N}^{2}\left(\mathbf{x}_{1}\right)+s_{N}^{2}\left(\mathbf{x}_{2}\right)\right)+\frac{o\left(1\right)}{2N}\left(\stackrel[i]{N}{\sum}x_{i,1}x_{i,2}\right)^{2}
\]

\begin{equation}
\leq\frac{\left(1-\delta\right)}{2N}\left(s_{N}^{2}\left(\mathbf{x}_{1}\right)+s_{N}^{2}\left(\mathbf{x}_{2}\right)+\left(\stackrel[i]{N}{\sum}x_{i,1}x_{i,2}\right)^{2}\right).
\end{equation}

Using this inequality, we have, for the atypical spin configurations
and large enough $N$:

\[
\underset{\mathbf{\left(x_{1},x_{2}\right)}\in\left(\mathcal{X}_{T,N}^{\left(2\right)}\right)^{c}}{\sum}f_{N}^{+}\left(\mathbf{x_{1}}\right)f_{N}^{+}\left(\mathbf{x_{2}}\right)
\]

\begin{equation}
\cdot\exp\left(-\frac{\beta^{2}}{4}+o\left(1\right)+\left(\frac{\beta}{2N}+o\left(1\right)\right)\left(s_{N}^{2}\left(\mathbf{x}_{1}\right)+s_{N}^{2}\left(\mathbf{x}_{2}\right)\right)+\frac{o\left(1\right)}{2N}\left(\stackrel[i]{N}{\sum}x_{i,1}x_{i,2}\right)^{2}\right)
\end{equation}

\begin{equation}
\leq\left(2^{N}o\left(1\right)\right)^{2},
\end{equation}

by lemma \ref{lem:exp-delta-3D-bounded}.

Inserting this into eq. (\ref{eq:typical-atypical-expansion-square})
yields:

\begin{equation}
E_{\varepsilon,N}\left(\left(\frac{E_{N}^{\left(\mu,BG\right)}\left(f_{N}^{+}\right)}{\cosh\left(\frac{\beta}{2Np}\right)\exp\left(\stackrel[i,j]{N}{\sum}\varepsilon_{N,i,j}\right)}\right)^{2}\right)
\end{equation}

\[
\leq\exp\left(\left|c_{N,2}\right|\right)e{}^{-\frac{\beta^{2}}{4}}\underset{\mathbf{\left(x_{1},x_{2}\right)}\in\mathcal{X}_{T,N}^{2}}{\sum}f_{N}^{+}\left(\mathbf{x_{1}}\right)f_{N}^{+}\left(\mathbf{x_{2}}\right)\exp\left(\frac{\beta}{2}\left(\frac{s_{N}^{2}\left(\mathbf{x}_{1}\right)+s_{N}^{2}\left(\mathbf{x}_{2}\right)}{N}\right)\right)
\]

\begin{equation}
+\left(2^{N}o\left(1\right)\right)^{2}
\end{equation}

\begin{equation}
\leq\exp\left(\left|c_{N,2}\right|\right)e{}^{-\frac{\beta^{2}}{4}}\left(\underset{\mathbf{x}\in\mathcal{X}_{N}}{\sum}f_{N}^{+}\left(\mathbf{x}\right)e^{\frac{\beta}{2N}s_{N}^{2}\left(\mathbf{x}\right)}\right)^{2}+\left(2^{N}o\left(1\right)\right)^{2}\label{eq:square expectation}
\end{equation}

\begin{equation}
=\exp\left(\left|c_{N,2}\right|\right)\left(e^{-\frac{\beta^{2}}{8}}E_{N}^{\left(\mu,CW\right)}\left(f_{N}^{+}\right)\right)^{2}\left(1+\left(\frac{o\left(1\right)2^{N}}{e^{-\frac{\beta^{2}}{8}}E_{N}^{\left(P,CW\right)}\left(f_{N}^{+}\right)Z_{N}^{\left(CW\right)}}\right)^{2}\right)
\end{equation}

\begin{equation}
\leq\exp\left(\left|c_{N,2}\right|\right)\left(e^{-\frac{\beta^{2}}{8}}E_{N}^{\left(\mu,CW\right)}\left(f_{N}^{+}\right)\right)^{2}\left(1+o\left(1\right)\right)\label{eq:O-division}
\end{equation}

(for large enough $N$, using proof assumptions and lemma \ref{lem:Z(CW)-order})

\begin{equation}
=a_{N,2}\left(e^{-\frac{\beta^{2}}{8}}E_{N}^{\left(\mu,CW\right)}\left(f_{N}^{+}\right)\right)^{2},
\end{equation}

with 

\begin{equation}
a_{N,2}\coloneqq\exp\left(\left|c_{N,2}\right|\right)\left(1+o\left(1\right)\right)\longrightarrow1,\quad Np\longrightarrow\infty.
\end{equation}
\end{proof}

\subsubsection{Proof of proposition \ref{lem:moment convergence-Np}, part 1}
\begin{proof}
We consider fixed $\delta>0$ and fixed $N$. By Markov's inequality:

\begin{equation}
P_{\varepsilon}\left(\left\{ \omega\in\varOmega_{\varepsilon}:\left|R_{N}\left(f_{N}^{+}\right)-1\right|>\delta\right\} \right)
\end{equation}

\begin{equation}
\leq\delta^{-2}E_{\varepsilon,N}\left(\left(R_{N}\left(f_{N}^{+}\right)-1\right)^{2}\right)
\end{equation}

\begin{equation}
\leq\delta^{-2}\left(a_{N,2}-2a_{N,1}+1\right)\longrightarrow0,
\end{equation}

as $Np\longrightarrow\infty$, using lemmata \ref{lem:exp-2} and
\ref{lem:sq-exp-2}.
\end{proof}

\subsubsection{Proof of proposition \ref{lem:moment convergence-Np}, part 2}
\begin{proof}
We consider fixed $\delta>0$ and fixed $N$. By Markov's inequality:
\begin{equation}
P_{\varepsilon}\left(\left\{ \omega\in\varOmega_{\varepsilon}:\left|T_{N}\left(f_{N}^{+}\right)\right|>\delta\right\} \right)
\end{equation}

\begin{equation}
\leq\delta^{-2}E_{\varepsilon,N}\left(T_{N}^{2}\left(f_{N}^{+}\right)\right)
\end{equation}

\begin{equation}
=\delta^{-2}E_{\varepsilon,N}\left(\left(R_{N}\left(f_{N}^{+}\right)\frac{E_{N}^{\left(\mu,CW\right)}\left(f_{N}^{+}\right)}{Z_{N}^{\left(\mu,CW\right)}}\right)^{2}\right)
\end{equation}

\begin{equation}
=\delta^{-2}\left(E_{N}^{\left(P,CW\right)}\left(f_{N}^{+}\right)\right)^{2}E_{\varepsilon,N}\left(R_{N}^{2}\left(f_{N}^{+}\right)\right)
\end{equation}

\begin{equation}
\leq\delta^{-2}\left(E_{N}^{\left(P,CW\right)}\left(f_{N}^{+}\right)\right)^{2}a_{N,2}\longrightarrow0,
\end{equation}

as $Np\longrightarrow\infty$, using lemma \ref{lem:sq-exp-2} and
proof assumptions.
\end{proof}

\subsection{Convergence of bounded integrals}
\begin{prop}
\label{lem:moment convergence}Assume that $\underset{N\rightarrow\infty}{\lim}E_{N}^{\left(P,CW\right)}\left(f_{N}^{+}\right)<\infty$
and $\underset{N\rightarrow\infty}{\lim}E_{N}^{\left(P,CW\right)}\left(f_{N}^{-}\right)<\infty$
exist. Then:

\[
E_{N}^{\left(P,BG\right)}\left(f_{N}\right)\overset{\mathbb{P}_{\varepsilon}}{\longrightarrow}\underset{N\rightarrow\infty}{\lim}E_{N}^{\left(P,CW\right)}\left(f_{N}\right).
\]
\end{prop}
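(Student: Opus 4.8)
The plan is to decompose $f_N = f_N^+ - f_N^-$ and handle each non-negative part separately, then recombine. For the positive part $f_N^+$, the key is to write the Bovier--Gayrard expectation $E_N^{(P,BG)}(f_N^+)$ as a quotient and relate numerator and denominator to the Curie--Weiss quantities through the functionals $R_N$ and $T_N$ of Definition \ref{def:functionals}. Concretely, observe that
\[
E_N^{(P,BG)}\left(f_N^+\right)=\frac{E_N^{(\mu,BG)}\left(f_N^+\right)}{E_N^{(\mu,BG)}\left(1\right)}=\frac{E_N^{(\mu,BG)}\left(f_N^+\right)/D_N}{E_N^{(\mu,BG)}\left(1\right)/D_N},
\]
where $D_N\coloneqq\cosh\left(\frac{\beta}{2Np}\right)\exp\left(-\frac{\beta^2}{8}+\sum_{i,j}^{N}\varepsilon_{N,i,j}\right)$ is the common normaliser appearing in Definition \ref{def:functionals}. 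Both the numerator and the denominator of this last fraction are then, up to the harmless factor $e^{-\beta^2/8}Z_N^{(CW)}$ (or $e^{-\beta^2/8}E_N^{(\mu,CW)}(f_N^+)$), exactly of the form controlled by Proposition \ref{lem:moment convergence-Np}.

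The first step is to split into the two cases of Proposition \ref{lem:moment convergence-Np}. If $\lim_N E_N^{(P,CW)}(f_N^+)>0$, then part (1) gives $R_N(f_N^+)\overset{\mathbb{P}_\varepsilon}{\longrightarrow}1$ and also $R_N(1)\overset{\mathbb{P}_\varepsilon}{\longrightarrow}1$ (applying the proposition with the constant function $1$ in place of $f_N^+$, whose Curie--Weiss expectation is identically $1>0$). Writing
\[
E_N^{(P,BG)}\left(f_N^+\right)=\frac{R_N\left(f_N^+\right)\,e^{-\beta^2/8}E_N^{(\mu,CW)}\left(f_N^+\right)/D_N}{R_N(1)\,e^{-\beta^2/8}Z_N^{(CW)}/D_N}=\frac{R_N\left(f_N^+\right)}{R_N(1)}\,E_N^{(P,CW)}\left(f_N^+\right),
\]
the continuous mapping theorem for stochastic convergence (the map $(a,b)\mapsto a/b$ is continuous at $(1,1)$) yields $E_N^{(P,BG)}(f_N^+)\overset{\mathbb{P}_\varepsilon}{\longrightarrow}\lim_N E_N^{(P,CW)}(f_N^+)$. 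If instead $\lim_N E_N^{(P,CW)}(f_N^+)=0$, then $R_N(1)\overset{\mathbb{P}_\varepsilon}{\longrightarrow}1$ still holds, while part (2) gives $T_N(f_N^+)\overset{\mathbb{P}_\varepsilon}{\longrightarrow}0$; since $E_N^{(P,BG)}(f_N^+)=T_N(f_N^+)/\bigl(R_N(1)\,e^{-\beta^2/8}Z_N^{(CW)}/D_N\bigr)$ — wait, more cleanly, $E_N^{(\mu,BG)}(f_N^+)/D_N=T_N(f_N^+)\,e^{-\beta^2/8}Z_N^{(CW)}/D_N$ cancels against the same factor in $E_N^{(\mu,BG)}(1)/D_N=R_N(1)\,e^{-\beta^2/8}Z_N^{(CW)}/D_N$, so $E_N^{(P,BG)}(f_N^+)=T_N(f_N^+)/R_N(1)\overset{\mathbb{P}_\varepsilon}{\longrightarrow}0=\lim_N E_N^{(P,CW)}(f_N^+)$. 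In both cases the conclusion holds for $f_N^+$, and symmetrically for $f_N^-$.

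The final step is to recombine: $E_N^{(P,BG)}(f_N)=E_N^{(P,BG)}(f_N^+)-E_N^{(P,BG)}(f_N^-)\overset{\mathbb{P}_\varepsilon}{\longrightarrow}\lim_N E_N^{(P,CW)}(f_N^+)-\lim_N E_N^{(P,CW)}(f_N^-)=\lim_N E_N^{(P,CW)}(f_N)$, using that the difference of two stochastically convergent sequences converges stochastically to the difference of the limits, and that the Curie--Weiss limit decomposes because both parts converge. The main obstacle I anticipate is not any of these steps individually but the bookkeeping of the $D_N$ cancellation and the case split: one must be careful that the factor $\exp(\sum_{i,j}\varepsilon_{N,i,j})$, which is random and in fact enormous, genuinely cancels between numerator and denominator of $E_N^{(P,BG)}$ so that only the well-behaved ratios $R_N/R_N(1)$ or $T_N/R_N(1)$ survive; and that applying Proposition \ref{lem:moment convergence-Np} with the constant function $1$ is legitimate (it is, since $|1|<M$ for $M>1$ and its Curie--Weiss expectation is $1$). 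Once that is set up, everything reduces to the continuous mapping theorem and Proposition \ref{lem:moment convergence-Np}.
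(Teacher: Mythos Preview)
Your proof is correct and follows the same overall structure as the paper's: decompose $f_N=f_N^+-f_N^-$, apply Proposition~\ref{lem:moment convergence-Np} to each of $f_N^+$, $f_N^-$, and the constant function $1$, and recombine. The key algebraic identity you use,
\[
E_N^{(P,BG)}\bigl(f_N^+\bigr)=\frac{R_N\bigl(f_N^+\bigr)}{R_N(1)}\,E_N^{(P,CW)}\bigl(f_N^+\bigr)
\quad\text{and}\quad
E_N^{(P,BG)}\bigl(f_N^+\bigr)=\frac{T_N\bigl(f_N^+\bigr)}{R_N(1)},
\]
is correct and makes the cancellation of the large random normaliser $D_N$ transparent (your intermediate display with the extra $e^{-\beta^2/8}/D_N$ factors is slightly garbled, but the final identities are right).

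Where you differ from the paper is in the mechanism for combining the separate stochastic limits. The paper invokes the subsequence characterisation of convergence in probability (Lemma~\ref{prop:stochastic_convergence_sure_subsequences}): it extracts, from any subsequence, a further subsequence along which $R_{n_i}(1)$, $R_{n_i}(f_N^+)$, and $R_{n_i}(f_N^-)$ all converge to $1$ almost surely, deduces almost-sure convergence of $E_{n_i}^{(P,BG)}(f_{n_i})$, and then returns via Lemma~\ref{prop:stochastic_convergence_sure_subsequences}. You instead appeal directly to the continuous mapping theorem (a Slutsky-type argument) for the maps $(a,b)\mapsto a/b$ and $(a,b)\mapsto a-b$. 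Your route is shorter and more standard; the paper's route makes the almost-sure structure explicit and avoids quoting Slutsky. Both are valid and ultimately encode the same content.
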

\begin{proof}
First, we consider the case $\underset{N\rightarrow\infty}{\lim}E_{N}^{\left(P,CW\right)}\left(f_{N}^{+}\right)>0$
and $\underset{N\rightarrow\infty}{\lim}E_{N}^{\left(P,CW\right)}\left(f_{N}^{-}\right)>0$.
By lemmata \ref{lem:moment convergence-Np} and \ref{prop:stochastic_convergence_sure_subsequences},
every subsequence of $\mathbb{N}$ has subsequences $\left(n_{i}^{\left(1\right)}\right)_{i\in\mathbb{N}}$,
$\left(n_{i}^{\left(2\right)}\right)_{i\in\mathbb{N}}$, $\left(n_{i}^{\left(3\right)}\right)_{i\in\mathbb{N}}$
such that $R_{n_{i}^{\left(1\right)}}\left(1\right)\longrightarrow1$,
$R_{n_{i}^{\left(2\right)}}\left(f_{N}^{+}\right)\longrightarrow1$
and $R_{n_{i}^{\left(3\right)}}\left(f_{N}^{-}\right)\longrightarrow1$
almost surely. Now let $\left(n_{i}\right)_{i\in\mathbb{N}}\coloneqq\left(n_{i}^{\left(1\right)}\right)_{i\in\mathbb{N}}\cap\left(n_{i}^{\left(2\right)}\right)_{i\in\mathbb{N}}\cap\left(n_{i}^{\left(3\right)}\right)_{i\in\mathbb{N}}\neq\textrm{Ø}$
(by lemma \ref{prop:stochastic_convergence_sure_subsequences}). It
is easy to see that almost surely,
\begin{equation}
\underset{i\rightarrow\infty}{\lim}E_{n_{i}}^{\left(P,BG\right)}\left(f_{n_{i}}^{+}\right)=\underset{i\rightarrow\infty}{\lim}E_{n_{i}}^{\left(P,CW\right)}\left(f_{n_{i}}^{+}\right)=\underset{N\rightarrow\infty}{\lim}E_{N}^{\left(P,CW\right)}\left(f_{N}^{+}\right),
\end{equation}

similarly for $f_{N}^{-}$, and by additivity of the expectation,
also 
\[
\underset{i\rightarrow\infty}{\lim}E_{n_{i}}^{\left(P,BG\right)}\left(f_{n_{i}}\right)=\underset{N\rightarrow\infty}{\lim}E_{N}^{\left(P,CW\right)}\left(f_{N}\right).
\]

By lemma \ref{prop:stochastic_convergence_sure_subsequences}, it
follows that

\[
E_{N}^{\left(P,BG\right)}\left(f_{N}\right)\overset{\mathbb{P}_{\varepsilon}}{\longrightarrow}\underset{N\rightarrow\infty}{\lim}E_{N}^{\left(P,CW\right)}\left(f_{N}\right).
\]

If $\underset{i\rightarrow\infty}{\lim}E_{N}^{\left(P,CW\right)}\left(f_{N}^{+}\right)=0$,
or $\underset{i\rightarrow\infty}{\lim}E_{N}^{\left(P,CW\right)}\left(f_{N}^{+}\right)=0$,
or both, the same result holds.
\end{proof}

\subsection{Convergence in distribution (theorem \ref{thm:weak-convergence-probability-Rd})}
\begin{proof}
We first consider $d=1$. Let $h\in C_{b}\left(\mathbb{R}\right)$
and $f_{N}\left(\mathbf{x}\right)\coloneqq h\circ Y_{N}\left(\mathbf{x}\right)$.
We have: $f_{N}^{+}=h^{+}\circ Y_{N}$ and $f_{N}^{-}=h^{-}\circ Y_{N}$.
Because $f^{+}$ and $f^{-}$ are bounded and by theorem assumptions: 

\begin{equation}
\underset{N\rightarrow\infty}{\lim}E_{N}^{\left(P,CW\right)}\left(f_{N}^{+}\right)<\infty,\quad\underset{N\rightarrow\infty}{\lim}E_{N}^{\left(P,CW\right)}\left(f_{N}^{-}\right)<\infty
\end{equation}

both exist. Using proposition \ref{lem:moment convergence}, it follows
that 

\[
\int_{\mathbb{R}}h\left(x\right)d\left(\mathbb{P}_{N}^{\left(BG\right)}\circ Y_{N}^{-1}\right)=\underset{\mathbf{x}\in\mathcal{X}_{N}}{\sum}h\circ Y_{N}\left(\mathbf{x}\right)\mathbb{P}_{N}^{\left(BG\right)}\left(\mathbf{x}\right)
\]

\[
=E_{N}^{\left(P,BG\right)}\left(f_{N}\right)\overset{\mathbb{P}_{\varepsilon}}{\longrightarrow}\underset{N\rightarrow\infty}{\lim}E_{N}^{\left(P,CW\right)}\left(f_{N}\right)=\int_{\mathbb{R}}h\left(x\right)d\mathbb{P}^{\left(CW\right)}.
\]

This holds for any $h\in C_{b}\left(\mathbb{R}\right)$. We note that
the mapping $\Omega_{\varepsilon}\mapsto\left(\mathbb{P}_{N}^{\left(BG\right)}\circ Y_{N}^{-1}\right)$
is a random probability measure and $\mathbb{R}^{d}$ is a Polish
space. With this and \cite{Berti_2006}, it follows that 

\[
d_{L}\left(\mathbb{P}_{N}^{\left(BG\right)}\circ Y_{N}^{-1},\mathbb{P}^{\left(CW\right)}\right)\overset{\mathbb{P}_{\varepsilon}}{\longrightarrow}0.
\]

For $d>1$, we use $h\in C_{b}\left(\mathbb{R}^{d}\right)$ and replace
$h\circ Y_{N}\left(\mathbf{x}\right)$ with $h\circ\left(Y_{N}^{\left(1\right)},Y_{N}^{\left(2\right)},..,Y_{N}^{\left(d\right)}\right)\left(\mathbf{x}\right)$.
\end{proof}

\section{Statements and declarations}

\subsection*{Conflict of interests}

The author declares no competing interests.

\subsection*{Data availability }

There are no data associated with this manuscript.

\bibliographystyle{plain}
\bibliography{BACH_random_graphs}

\end{document}